\newcommand{\RR}{\mathbb R}
\newcommand{\NN}{\mathbb N}
\newcommand{\CC}{\mathbb C}
\newcommand{\QQ}{\mathbb Q}
\newcommand{\Pos}{\operatorname{Pos}}
\newcommand{\Id}{\operatorname{Id}}
\newcommand{\I}{\operatorname{I}}
\newcommand{\M}{\mathcal{M}}
\newcommand{\T}{\mathcal{T}}
\newcommand{\h}{\mathcal{H}}
\newcommand{\G}{\mathcal{G}}
\newcommand{\kk}{\mathcal{K}}
\newcommand{\cX}{\mathcal{X}}
\newcommand{\cY}{\mathcal{Y}}
\newcommand{\cI}{\mathcal{I}}
\newcommand{\x}{\underline{x}}
\newcommand{\diag}{\operatorname{diag}}
\newcommand{\beqn}{\begin{eqnarray*}}
\newcommand{\eeqn}{\end{eqnarray*}}
\newcommand{\LL}{\mathcal{L}}
\newcommand{\B}{\operatorname{Bor}}
\newcommand{\PP}{\mathcal{P}}
\providecommand{\eps}{\epsilon}
\newtheorem{corollary}{Corollary}
\newtheorem{theorem}{Theorem}
\newtheorem{proposition}{Proposition}
\newtheorem{lemma}{Lemma}
\theoremstyle{definition}
\newtheorem{remark}{Remark}
\begin{document}
\title{Moment problems for operator polynomials}

\author{Jaka Cimpri\v c and Alja\v z Zalar}

\address{Jaka Cimpri\v c, University of Ljubljana, Faculty of Math.~and Phys.,
Dept.~of Math., Jadranska 19, SI-1000 Ljubljana, Slovenija. www page: \url{http://www.fmf.uni-lj.si/~cimpric}.} 
\email{cimpric@fmf.uni-lj.si}
\address{Alja\v z Zalar, University of Ljubljana, Faculty of Math.~and Phys., Dept.~of Math.}
\email{aljaz.zalar@student.fmf.uni-lj.si}

\date{\today}

\begin{abstract} 
Haviland's theorem states, that given a closed subset $K$ in $\RR^n$ each functional 
$L  \colon \RR[\x] \to \RR$ positive on $\Pos(K):=\left\{p\in\RR[\x] \mid p|_K\geq 0\right\}$ 
admits an integral representation by a positive Borel measure. Schmüdgen proved, that in the 
case of compact semialgebraic set $K$ it suffices to check positivity of $L$ on a preordering $T$, 
having $K$ as the non-negativity set. Further he showed, that the compactness of $K$ is equivalent 
to the archimedianity of $T$. The aim of this paper is to extend these results from 
functionals on the usual real polynomials to operators mapping from the real matrix or operator polynomials 
into $\RR, M_n(\RR)$ or $B(\kk)$. 
\end{abstract}

\keywords{}

\subjclass[2012]{}

\maketitle

\section{Introduction}

Let $K$ be a closed subset of $\RR^d$, $d \ge 1$. The $K$-\textit{moment problem} asks 
for which multisequences $c \colon \NN^d \to \RR$ there exists a positive Borel measure $\mu$ on $K$
such that $c_\alpha = \int_K x^\alpha \, d\mu := \int_K x_1^{\alpha_1}\cdots x_d^{\alpha_d} \, d\mu$
for every $\alpha=(\alpha_1,\ldots,\alpha_d) \in \NN^d$.
A solution to this problem is given by the following result, see \cite[Theorem 3.1.2]{Mar}:

\begin{theorem}[Haviland, 1935]\label{hav}
For a linear functional $L:\RR[\underline{x}]\rightarrow \RR$ and a closed set $K$ in $\RR^d$ the following statements are equivalent:
\begin{enumerate}
\item There exists a positive Borel measure $\mu$ on $K$ such that $L(p)=\int_K p \, d \mu$ for every $p \in \RR[\x]$.
\item $L(p)\geq 0$ holds for all $p\in \RR[\underline{x}]$ satisfying $p\geq 0$ on $K$.
\end{enumerate}
\end{theorem}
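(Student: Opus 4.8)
The implication (1) $\Rightarrow$ (2) is immediate: if $L(p)=\int_K p\,d\mu$ with $\mu$ a positive measure and $p|_K\ge 0$, then the integrand is nonnegative, so $L(p)\ge 0$. The whole content lies in (2) $\Rightarrow$ (1), and the plan is to combine the Riesz extension theorem with the Riesz--Markov--Kakutani representation theorem, the only delicate point being the non-compactness of $K$.

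First I would set up the extension. Regard $\RR[\x]$ as a subspace of the real vector space $V$ of all functions $f\colon K\to\RR$ that are dominated by a polynomial, i.e.\ $|f|\le q$ on $K$ for some $q\in\RR[\x]$, and let $C\subseteq V$ be the cone of functions that are nonnegative on $K$. Then $\RR[\x]\cap C=\Pos(K)$, on which $L\ge 0$ by hypothesis (2). The domination condition makes $\RR[\x]$ cofinal in $V$ with respect to the order induced by $C$: for every $f\in V$ there is a $q\in\RR[\x]$ with $q-f\in C$. Hence the Riesz extension theorem applies and yields a linear functional $\tilde L\colon V\to\RR$ extending $L$ and satisfying $\tilde L(C)\ge 0$. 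Passing to a measure is then routine: the space $C_c(K)$ of compactly supported continuous functions sits inside $V$, and $\tilde L|_{C_c(K)}$ is a positive linear functional, so the Riesz--Markov--Kakutani theorem produces a positive Borel measure $\mu$ on $K$ with $\tilde L(f)=\int_K f\,d\mu$ for all $f\in C_c(K)$.

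The main obstacle is the last step: upgrading this identity from $C_c(K)$ to all of $\RR[\x]$, which is exactly where non-compactness bites, since polynomials need not be bounded and need not even be $\mu$-integrable a priori. I would control the tails by choosing a sequence $\phi_n\in C_c(K)$ with $0\le\phi_n\uparrow 1$ pointwise on $K$. For $p\in\Pos(K)$ the functions $p\phi_n$ lie in $C_c(K)$ and increase to $p$, so monotone convergence gives $\int_K p\phi_n\,d\mu\to\int_K p\,d\mu$, while positivity of $\tilde L$ on $p(1-\phi_n)\in C$ gives $\int_K p\phi_n\,d\mu=\tilde L(p\phi_n)\le\tilde L(p)=L(p)$; this forces $\int_K p\,d\mu\le L(p)<\infty$, so every $p\in\Pos(K)$, and hence every polynomial written as a difference of such, is $\mu$-integrable. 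A short additional argument then promotes the inequality to the equality $L(p)=\int_K p\,d\mu$ for all $p$, completing the proof. I expect the bookkeeping in this tail estimate to be the only genuinely technical part.
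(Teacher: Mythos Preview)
The paper does not prove Theorem~\ref{hav} itself but cites \cite[Theorem~3.1.2]{Mar}; however, its proof of the operator generalisation Theorem~\ref{haviland} specialises at $\h=\RR$ to exactly your strategy: pass to the image $A_0$ of $\RR[\x]$ in $C(K,\RR)$, use the M.~Riesz extension theorem to extend $L$ from $A_0$ to the space $C'(K,\RR)$ of continuous functions dominated by a polynomial (your $V$), apply the Riesz representation theorem on $C_c(K)$ to get $\mu$, and then push the integral identity from $C_c(K)$ back to polynomials via monotone convergence. So your outline is correct and follows the same route as the paper.

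One caution: the ``short additional argument'' you defer is the actual crux, not mere bookkeeping. Generic cutoffs $\phi_n\uparrow 1$ give only $\int_K p\,d\mu\le L(p)$ for $p\in\Pos(K)$; for equality you must show $\tilde L\bigl(p(1-\phi_n)\bigr)\to 0$, and positivity of $\tilde L$ alone does not force this (a non-unique Riesz extension could in principle carry mass ``at infinity'' on $V\setminus C_c(K)$). The paper, following Marshall, handles this with a quantitative tail bound: setting $p=x_1^2+\cdots+x_d^2$, one constructs for each $f\in C'(K,\RR)_+$ an increasing sequence $f_i\in C_c(K)_+$ with $0\le f-f_i\le \tfrac{1}{i}(f+\hat p)^2$, so that $\tilde L(f-f_i)\le\tfrac{1}{i}\,\tilde L\bigl((f+\hat p)^2\bigr)\to 0$ and hence $\tilde L(f_i)\to\tilde L(f)$; monotone convergence then closes the loop. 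Your cutoff approach can be made to work, but only after inserting a polynomial domination of this type (e.g.\ $1-\phi_n\le \hat q/n$ on the support of $1-\phi_n$ for a suitable polynomial $q$).
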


\begin{remark}\label{havkomp}
If $K$ is compact, then the measure $\mu$ is unique, see \cite[Corollary 3.3.1]{Mar}.
For noncompact $K$, the question of uniqueness is highly nontrivial
and will not be discussed here, see \cite{ps1} and \cite{ps2}.
\end{remark}

Theorem \ref{hav} is not considered entirely satisfactory, because the set
$$\Pos(K):=\{p \in \RR[\x] \mid p\geq 0 \text{ on } K\}$$ is very big.
If the set $K$ is defined by finitely many polynomial inequalities, then
the condition $L(\Pos(K))\ge 0$ is equivalent to $L(T) \ge 0$ for some set
$T$ which is much smaller that $\Pos(K)$. This is the contents of Theorem \ref{sch}.

For a finite set $S=\left\{g_1,\ldots,g_k\right\}$ in $\RR[\underline{x}]$ write
$$K_S := \left\{\x\in \RR^d\mid g_1(\x)\geq 0, g_2(\x)\geq 0, \ldots, g_k(\x)\geq 0\right\}$$
and
$$M_S := \{\sigma_0+\sigma_1 g_1+\ldots+\sigma_k g_k \mid \sigma_0,\sigma_1,\ldots,\sigma_k \in \sum \RR[\x]^2\}.$$

\begin{theorem}\label{sch}
Let $S$ be a finite subset of $\RR[\underline{x}]$ such that $K_S$ be compact. 
Then there exists a finite subset $S_1$ of $\RR[\underline{x}]$ containing $S$ such that $K_{S_1}=K_S$ and
\begin{enumerate}
\item every $p \in \RR[\x]$ such that $p\vert_{K_S} >0$ belongs to $M_{S_1}$,
\item for every linear functionals $L$ on $\RR[\x]$ such that $L(M_{S_1}) \ge 0$
there exists a positive Borel measure $\mu$ on $K_S$ such that $L(p)=\int_{K_S} p \, d \mu$ for all $p \in \RR[\x]$.
\end{enumerate}
More precisely, we can take $S_1$ to be either the set $\prod S$ of all square-free products of elements from $S$ (Schm\" udgen 1991, 
see \cite{sch-psatz}, a nice refinement is \cite{jp}) or the set $S \cup \{l^2-\sum_{i=1}^d x_i^2\}$ for some $l \in \NN$ 
(Putinar 1993, see \cite{put}). 
\end{theorem}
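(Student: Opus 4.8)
The plan is to prove the two assertions in order, establishing the Positivstellensatz statement (1) first and then extracting the moment statement (2) from it together with Haviland's theorem (Theorem \ref{hav}). Assertion (1) is handled separately for the two admissible choices of $S_1$.

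For $S_1 = S \cup \{l^2 - \sum_{i=1}^d x_i^2\}$ I would begin by using the compactness of $K_S$: being bounded, $K_S$ satisfies $\sum_{i=1}^d x_i^2 \le l^2$ on $K_S$ for a suitable $l \in \NN$, so the extra inequality holds throughout $K_S$ and hence $K_{S_1} = K_S$. The purpose of this generator is that $l^2 - \sum_{i=1}^d x_i^2 \in M_{S_1}$ makes $M_{S_1}$ archimedean: a bound on $\sum_{i=1}^d x_i^2$ yields bounds $N \pm x_i \in M_{S_1}$ for each coordinate, and from these a bound on every polynomial. Putinar's theorem then gives exactly that every $p$ with $p|_{K_S} > 0$ lies in $M_{S_1}$. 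For $S_1 = \prod S$ I would observe that $M_{\prod S}$ coincides with the preordering generated by $S$, since the generators of the former are precisely the square-free products $g_1^{e_1}\cdots g_k^{e_k}$; moreover $K_{\prod S} = K_S$ because each such product is nonnegative on $K_S$ and each $g_i$ already lies in $\prod S$. Schm\"udgen's Positivstellensatz then yields assertion (1) directly.

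To pass from (1) to (2), assume $L(M_{S_1}) \ge 0$ and take $p \in \RR[\x]$ with $p \ge 0$ on $K_S$. For every $\eps > 0$ the polynomial $p + \eps$ is strictly positive on $K_S$, so $p + \eps \in M_{S_1}$ by (1), giving $L(p) + \eps L(1) = L(p + \eps) \ge 0$. Since $1 \in M_{S_1}$ forces $L(1) \ge 0$, letting $\eps \to 0^+$ gives $L(p) \ge 0$. Thus $L$ is nonnegative on $\Pos(K_S)$, and Haviland's theorem supplies a positive Borel measure on $K_S$ representing $L$.

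The genuinely deep input, and the main obstacle if one wished to be self-contained, is the abstract archimedean Positivstellensatz (the Kadison--Dubois/Jacobi representation theorem) underlying both Putinar's and Schm\"udgen's results: if a quadratic module $M$ in $\RR[\x]$ is archimedean and $p > 0$ on its nonnegativity set, then $p \in M$. Its proof separates $p$ from $M$ by a state via Hahn--Banach and then uses archimedeanity to realize that state as evaluation at a point of the nonnegativity set. Putinar's theorem is the direct application to the module $M_{S_1}$, whose archimedeanity we arranged explicitly, while Schm\"udgen's version follows once one knows that the preordering $M_{\prod S}$ is automatically archimedean for compact $K_S$. By comparison the reductions above --- the identities $K_{S_1} = K_S$ and the $\eps$-perturbation in the last step --- are routine.
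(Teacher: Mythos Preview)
Your proposal is correct and mirrors the paper's treatment: the paper does not prove Theorem~\ref{sch} independently but cites Schm\"udgen \cite{sch-psatz} and Putinar \cite{put} for assertion~(1) and then simply remarks that assertion~(2) follows from~(1) together with Haviland's theorem (Theorem~\ref{hav}). Your write-up supplies exactly this outline with the routine details (the $\eps$-perturbation for passing from strict to non-strict positivity, and the verification that $K_{S_1}=K_S$) made explicit.
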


Note that claim (2) of Theorem \ref{sch} is a consequence of claim (1) and Theorem \ref{hav}. 

The aim of this paper is to extend Theorems \ref{hav} and \ref{sch} to matrix polynomials. 
We also have some partial results (both positive and negative) for operator polynomials.

In most of the current literature, the term \textit{operator moment problem} refers to
the question of existence of integral representations for linear mappings $L \colon \RR[\x] \to B(\kk)_h$ where
$B(\kk)_h$ is the real vector space of all bounded self-adjoint operators on a Hilbert space $\kk$.
The univariate case is well-understood, see e.g.~\cite{krein} and \cite{kovalishina}.
In the multivariate case, see \cite[Theorem I.4.3]{Vas2} for a result related to our Theorem \ref{sch}.
A different kind of a moment problem is considered in \cite{Amb-Vas} where the authors study 
the question of existence of integral representations for linear functionals 
$L \colon \RR[\x] \otimes B(\h)_h \to \RR$. Here, $\RR[\x] \otimes B(\h)_h=B(\h)_h[\x]$
is the real vector space of all polynomials with coefficients from $B(\h)_h$. 
For the unit cube in $\RR^d$, their Theorem 3 extends our Theorem \ref{sch}.

In this paper, we unify both approaches by studying integral representations of linear mappings
$L \colon \RR[\x] \otimes B(\h)_h \to B(\kk)_h$. The relevant measure and integration theory
was developed in \cite{Dor1}. It is recalled and slightly modified in Section \ref{secdob}. 
In Section \ref{sechav} we prove a generalization of Theorem \ref{hav} to arbitrary $\h$ and $\kk$, 
see Theorem \ref{genhavi} and its special case Theorem \ref{haviland} for $\kk=\RR$.
In Section \ref{schsec}, we prove a generalization of Putinar's part of Theorem \ref{sch} 
to arbitrary $\h$ and $\kk$ and a generalization of Schm\" udgen's part of Theorem \ref{sch} 
to finite-dimensional $\h$ and arbitrary $\kk$, see Theorems \ref{ver1} and \ref{cim}.  Finally, in Section \ref{schex}, we show that the main step in the proof of Theorem \ref{cim}
fails for infinite dimensional $\h$ even if $\kk=\RR$.

\section{Operator-valued measures}
\label{secdob}

Let $\PP$ be a ring of sets and let $\h$ and $\kk$ be real Hilbert spaces. 
We denote by $\LL(B(\h)_h,B(\kk)_h)$ the Banach space of all bounded linear operators from $B(\h)_h$ to $B(\kk)_h$, where 
$B(\h)_h$ and $B(\kk)_h$ are the Banach spaces of all bounded self-adjoint linear operators on $\h$ and $\kk$, respectively. 
A set function 
$$m \colon \PP \to \LL(B(\h)_h,B(\kk)_h)$$ 
is a \textit{non-negative operator-valued measure} if for every $A\in B(\h)_+$ the set function 
$$m_A \colon \PP \to B(\kk)_h, \quad m_A(\Delta)=m(\Delta)(A),$$
is a positive operator-valued measure. 

\begin{remark}\label{almost-positive}
Recall from \cite[Definition 1]{Ber} that a set function 
$$E \colon \PP\to B(\kk)_h$$ is a \textit{positive operator-valued measure}, 
if it satisfies the following conditions:
\begin{enumerate}
\item[(a)] $E(\Delta)\succeq 0$ for all $\Delta\in \PP$.
\item[(b)] $E(\Delta_1 \cup \Delta_2)=E(\Delta_1)+E(\Delta_2)$ if $\Delta_1$ and $\Delta_2$ are disjoint subsets in $\PP$.
\item[(c)] If $\Delta_i$ is an increasing sequence in $\PP$ and $\Delta=\bigcup_i \Delta_i$ belongs to $\PP$ then $E(\Delta)
=\sup_i E(\Delta_i)$.
\end{enumerate}
When $\h=\RR$, we can identify $\LL(B(\h)_h,B(\kk)_h)$ with $B(\kk)_h$. In this identification
the non-negative operator-valued measure $m$ corresponds to the positive operator-valued measure $m_1$.
Therefore, positive operator-valued measures are special cases of non-negative operator-valued measures.
\end{remark}

\begin{remark}
Our definition of a non-negative operator-valued measure is similar to the following definition from \cite[p.~511]{Dor1}:
A set function $m \colon \PP\to \LL(\cX,\cY)$, where $\PP$ is a $\delta$-ring of sets and $\cX$, $\cY$ are Banach spaces, 
is called an \textsl{operator-valued measure countably additive in the strong operator topology} if for every $x\in X$ 
the set function $m_x \colon \PP \to \cY, \Delta \mapsto m(\Delta)x$, is a countably additive vector measure. 

These definitions coincide if
$\cX=B(\h)_h$ for some  Hilbert space $\h$, 
$\cY=B(\kk)_h$ for some finite-dimensional Hilbert space $\kk$, and
$m_x(\Delta) \in B(\kk)_+$ for every $x \in B(\h)_+$ and every $\Delta \in \PP$. 
The problem with infinite-dimensional $\kk$ is that the definitions of convergence of 
$m_x(\Delta_i)$ to $m_x(\bigcup_i \Delta_i)$ do not coincide. 
\end{remark}

Let $X$ be a set, $\PP$ a $\sigma$-algebra of subsets of $X$ and $m \colon \PP \to \LL(B(\h)_h,B(\kk)_h)$
a non-negative operator-valued measure. Let $\cI$ denote the set of all $\PP$-measurable real-valued functions 
on $X$ which are  $m_A$-integrable for every $A \in B(\h)_+$.
It is a real vector space and it consists at least of all bounded measurable functions. 
In particular, if $\PP=\B(X)$ (the Borel $\sigma$-algebra of $X$) then $C_c(X,\RR)\subset\cI$.

\begin{remark}
\label{eintdef}
Let $E \colon \PP \to B(\kk)_h$ be a positive operator-valued measure. 
For every $x \in \kk$ we define a positive measure 
$E_x \colon \PP \to \RR^{\geq 0}$ by $E_x(\Delta)=\langle E(\Delta)x,x\rangle$.
We say that a $\PP$-measurable function $f \colon X \to \RR$ is $E$-\textit{integrable} if
there exists a constant $K_f \in \RR$ such that 
$\int \vert f \vert \; dE_x \le K_f \Vert x \Vert^2$ for every $x \in \kk$.
(If $\Vert f \Vert_\infty < \infty$ then $K_f=\Vert E(X) \Vert \; \Vert f \Vert_\infty$ works.)
The mapping $(x,y) \mapsto \frac14  (\int f \; dE_{x+y}-\int f \; dE_{x-y} )$
is then a bounded bilinear form; see \cite[Section 5]{Ber}. 
Therefore, there exists a bounded operator $\int f\; dE \in B(\kk)_h$ such that 
$\int f\; dE_x=\langle (\int f\; dE)x,x \rangle$ for every $x \in \kk$.
\end{remark}

For every $f \in \cI$ and every  operator $A\in B(\h)_h$, we define
$$\int {f\; dm_A}:=\int {f\; dm_{A_+}}-\int {f\; dm_{A_-}}$$ 
where $A_+,A_- \in B(\h)_+$ are the positive and the negative part of $A$. Namely, $A=A_+-A_-$, $A_+A_-=A_-A_+=0$ and hence
$\left\|A_{\pm}\right\|\leq \left\|A\right\|$ (see \cite[Proposition 5.2.2(4)]{Li}).

Let $\cI \otimes B(\h)_h$ be the algebraic tensor product of $\cI$ and $B(\h)_h$ over $\RR$. 
By the universal property of tensor products, the bilinear form 
$$\cI \times B(\h)_h \to B(\kk)_h, \quad (f,A) \mapsto \int {f\; dm_A}$$
extends to a linear map 
$$\cI \otimes B(\h)_h \to B(\kk)_h, \quad
F=\sum_{i=1}^{n}f_i\otimes A_i \mapsto \int{F\;dm}:= \sum_{i=1}^{n}\int {f_i\; dm_{A_i}}.$$

We first recall the following operator-valued version of the F. Riesz representation theorem for positive functionals,
see \cite[Theorem 19]{Ber}. A positive operator-valued measure with $\PP=\B(X)$
will be called a \textit{Borel positive operator-valued measure}.

\begin{proposition}
\label{riesz}
Let $X$ be a locally compact and $\sigma$-compact metrizable space, $\kk$ a Hilbert space 
and $T \colon C_c(X,\RR) \to B(\kk)_h$ a positive bounded linear map.
Then there exists one and only one Borel positive operator-valued measure $E$ on $X$ such that $T(f)=\int f \, dE$ for every
$f \in C_c(X,\RR)$. 
\end{proposition}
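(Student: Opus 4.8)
The plan is to reduce the statement to the classical Riesz representation theorem by passing to quadratic forms, and then to reassemble the resulting scalar measures into an operator-valued measure by polarization. This is essentially the route taken in \cite[Theorem 19]{Ber}.

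First I would fix $x \in \kk$ and consider the functional $T_x \colon C_c(X,\RR) \to \RR$ defined by $T_x(f) = \langle T(f)x, x\rangle$. Since $T$ is positive, $f \ge 0$ forces $T(f) \succeq 0$ and hence $T_x(f) \ge 0$, so $T_x$ is a positive linear functional. The classical Riesz representation theorem, which applies because $X$ is locally compact, $\sigma$-compact and metrizable, then yields a unique regular positive Borel measure $\mu_x$ on $X$ with $T_x(f) = \int f \, d\mu_x$ for all $f$. Boundedness of $T$ gives, for every $f \in C_c(X,\RR)$ with $0 \le f \le 1$, the estimate $T_x(f) = \langle T(f)x, x\rangle \le \Vert T\Vert \, \Vert x\Vert^2$, and taking the supremum over such $f$ shows $\mu_x(X) \le \Vert T\Vert \, \Vert x\Vert^2$; in particular each $\mu_x$ is finite.

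Next I would reconstruct an operator $E(\Delta) \in B(\kk)_h$ for each $\Delta \in \B(X)$. For fixed $f$ the map $x \mapsto T_x(f)$ is the quadratic form of $T(f)$, so $T_{x+y}(f) + T_{x-y}(f) = 2T_x(f) + 2T_y(f)$; by the uniqueness of the representing measure this gives the parallelogram identity $\mu_{x+y} + \mu_{x-y} = 2\mu_x + 2\mu_y$. Hence for each $\Delta$ the map $x \mapsto \mu_x(\Delta)$ is a nonnegative quadratic form bounded by $\Vert T\Vert \, \Vert x\Vert^2$, and its polarization $B_\Delta(x,y) = \frac14\big(\mu_{x+y}(\Delta) - \mu_{x-y}(\Delta)\big)$ is a bounded symmetric bilinear form. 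By the Riesz lemma for bilinear forms there is a unique $E(\Delta) \in B(\kk)_h$ with $\langle E(\Delta)x, y\rangle = B_\Delta(x,y)$, so that $\langle E(\Delta)x, x\rangle = \mu_x(\Delta)$.

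It then remains to verify that $E$ is a Borel positive operator-valued measure and that it represents $T$. Condition (a) is immediate from $\langle E(\Delta)x, x\rangle = \mu_x(\Delta) \ge 0$. Condition (b) follows because additivity of each $\mu_x$ gives $\langle E(\Delta_1 \cup \Delta_2)x, x\rangle = \langle(E(\Delta_1)+E(\Delta_2))x, x\rangle$ for all $x$ whenever $\Delta_1, \Delta_2$ are disjoint, and a self-adjoint operator is determined by its quadratic form. For condition (c), continuity from below of each $\mu_x$ gives $\langle E(\Delta_i)x, x\rangle \uparrow \langle E(\Delta)x, x\rangle$, and since the increasing family $(E(\Delta_i))_i$ is bounded above by $E(\Delta)$ this identifies $\sup_i E(\Delta_i) = E(\Delta)$. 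Since $E_x = \mu_x$ is finite, each $f \in C_c(X,\RR)$ is $E$-integrable (with $K_f = \Vert T\Vert \, \Vert f\Vert_\infty$), and $\langle(\int f \, dE)x, x\rangle = \int f \, dE_x = \int f \, d\mu_x = \langle T(f)x, x\rangle$ for every $x$, whence $\int f \, dE = T(f)$. Uniqueness follows by reversing the argument: any representing $E'$ produces measures $E'_x$ with the same integrals against $C_c(X,\RR)$, so $E'_x = \mu_x$ by uniqueness in the classical theorem, and therefore $E' = E$.

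I expect the main obstacle to be condition (c), where one must pass from the pointwise monotone convergence $\langle E(\Delta_i)x, x\rangle \to \langle E(\Delta)x, x\rangle$ to the order-theoretic supremum of the operators, relying on the monotone completeness of $B(\kk)_h$ together with the a priori bound $E(\Delta_i) \preceq E(\Delta)$. The boundedness estimate $\mu_x(X) \le \Vert T\Vert \, \Vert x\Vert^2$, which is what makes the polarization produce a genuinely bounded operator rather than merely a densely defined form, is the other point that needs care.
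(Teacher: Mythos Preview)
Your argument is correct and matches what the paper does: the paper does not prove this proposition at all but simply cites \cite[Theorem 19]{Ber}, and your sketch is precisely the Berberian argument you yourself invoke. There is nothing to compare beyond noting that you have supplied the details the paper chose to omit.
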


Proposition \ref{dobrakov} extends Proposition \ref{riesz} from $C_c(X,\RR)$ to $C_c(X,\RR) \otimes B(\h)_h$. 
It is similar to \cite[Theorem 2]{Dor}.
The vector space $C_c(X,\RR) \otimes B(\h)_h$ can be identified with a subspace of $C_c(X,B(\h)_h)$ 
from where it inherits the supremum norm and the positive cone $C_c(X,B(\h)_+)$. 
Unlike \cite{Dor1} we will never integrate functions from $C_c(X,B(\h)_h)$ that do not belong
to $C_c(X,\RR) \otimes B(\h)_h$.

\begin{proposition}
\label{dobrakov}
Let $X$ be a locally compact and $\sigma$-compact metrizable space, $\h$ and $\kk$ Hilbert spaces and 
$L: C_c(X,\RR) \otimes B(\h)_h \rightarrow B(\kk)_h$ a positive bounded linear map.
Then there exists a unique non-negative operator-valued measure
$$m: \B(X)\rightarrow \LL(B(\h)_h, B(\kk)_h )$$ such that 
$$L(F)=\int {F\; dm}$$ 
holds for all $F\in C_c(X,\RR) \otimes B(\h)_h.$
\end{proposition}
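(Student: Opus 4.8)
The plan is to reduce the vector-valued representation to the scalar-valued one already furnished by Proposition \ref{riesz}, and then check that the resulting family of representing measures assembles into a single non-negative operator-valued measure. First I would fix $A \in B(\h)_+$ and consider the map $T_A \colon C_c(X,\RR) \to B(\kk)_h$ defined by $T_A(f) = L(f \otimes A)$. Since $L$ is positive and $f \otimes A \succeq 0$ whenever $f \ge 0$ (as $A \succeq 0$), the map $T_A$ is positive; it is also bounded, with norm controlled by $\Vert L \Vert \, \Vert A \Vert$, because the inclusion $C_c(X,\RR) \otimes B(\h)_h \hookrightarrow C_c(X,B(\h)_h)$ is isometric for the supremum norm and $\Vert f \otimes A \Vert_\infty = \Vert f \Vert_\infty \Vert A \Vert$. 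Applying Proposition \ref{riesz} to $T_A$ yields a unique Borel positive operator-valued measure $E_A$ on $X$ with $T_A(f) = \int f \, dE_A$ for all $f \in C_c(X,\RR)$. I would then \emph{define} the candidate measure by $m(\Delta)(A) := E_A(\Delta)$ for $A \in B(\h)_+$, and extend to all of $B(\h)_h$ by the decomposition $A = A_+ - A_-$ used already in the construction of $\int F \, dm$.

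The next step is to promote $A \mapsto m(\Delta)(A) = E_A(\Delta)$ to a genuine bounded linear operator in $\LL(B(\h)_h, B(\kk)_h)$ for each fixed $\Delta \in \B(X)$, and to verify that the resulting set function $m$ is a non-negative operator-valued measure in the sense of Section \ref{secdob}. Linearity and additivity on the positive cone $B(\h)_+$ should follow from the uniqueness clause in Proposition \ref{riesz}: for $A, B \in B(\h)_+$ the measure $E_{A+B}$ and the sum $E_A + E_B$ both represent the positive map $f \mapsto L(f \otimes (A+B)) = T_A(f) + T_B(f)$, so by uniqueness $E_{A+B} = E_A + E_B$, and similarly $E_{\lambda A} = \lambda E_A$ for $\lambda \ge 0$. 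This additivity on the cone, together with $A = A_+ - A_-$, gives a well-defined linear map on all of $B(\h)_h$; boundedness of $\Delta \mapsto m(\Delta)$ into $\LL(B(\h)_h, B(\kk)_h)$ follows from the uniform norm bound $\Vert E_A(\Delta) \Vert \le \Vert T_A \Vert \le \Vert L \Vert \Vert A \Vert$. By construction each $m_A = E_A$ is a positive operator-valued measure, which is exactly the defining condition for $m$ to be non-negative.

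It then remains to recover the integral formula and uniqueness. For a simple tensor $F = f \otimes A$ with $A \in B(\h)_+$ we have $\int F \, dm = \int f \, dm_A = \int f \, dE_A = T_A(f) = L(f \otimes A)$ by the very definition of $E_A$; extending through $A = A_+ - A_-$ and then by linearity over the algebraic tensor product shows $L(F) = \int F \, dm$ for every $F \in C_c(X,\RR) \otimes B(\h)_h$. Uniqueness of $m$ reduces to uniqueness of each $E_A$ for $A \in B(\h)_+$ (by Proposition \ref{riesz}) together with the fact that $m$ is determined on $B(\h)_+$ by its integrals against $C_c(X,\RR)$ and on $B(\h)_h$ by linearity.

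The main obstacle I anticipate is the interchange of the uniqueness argument with the passage between the scalar representation theorem and the operator-valued bilinear-form construction of $\int f \, dE$ recalled in Remark \ref{eintdef}: one must confirm that the operator $m(\Delta)(A)$ extracted from the $\kk$-vector functionals $x \mapsto \langle m(\Delta)(A) x, x\rangle$ genuinely coincides with $E_A(\Delta)$ and depends linearly on $A$ at the level of operators, not merely quadratic forms. Polarization handles the move from diagonal forms $\langle \cdot\, x, x\rangle$ to full bilinear forms, but care is needed that polarization commutes with the $A$-dependence so that linearity in $A$ is preserved after taking the bilinear form. This is the step where the finite- versus infinite-dimensionality of $\kk$ could in principle intrude, though since all operators here live in $B(\kk)_h$ and the bilinear forms are uniformly bounded, the uniqueness clause of Proposition \ref{riesz} should be strong enough to force the required consistency without any dimension restriction.
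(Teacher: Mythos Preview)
Your approach is essentially identical to the paper's: define $T_A=L(\,\cdot\,\otimes A)$ for $A\in B(\h)_+$, invoke Proposition~\ref{riesz} to get $E_A$, use the uniqueness clause to obtain $E_{A+B}=E_A+E_B$ and $E_{\lambda A}=\lambda E_A$ on the cone, extend via $A=A_+-A_-$, and check the integral formula on simple tensors. Two small remarks: the bound $\Vert E_A(\Delta)\Vert\le\Vert T_A\Vert$ you quote is exactly where the $\sigma$-compactness hypothesis is used---the paper spells this out by exhausting $X$ with compacta $\Delta_i$, choosing Urysohn functions $u_i\in C_c(X,[0,1])$ with $u_i|_{\Delta_i}\equiv 1$, and observing $E_A(\Delta_i)\preceq\int u_i\,dE_A=L(u_i\otimes A)$; and the ``obstacle'' you anticipate in your last paragraph is not a real issue, since Proposition~\ref{riesz} already furnishes uniqueness of $E_A$ at the level of operators in $B(\kk)_h$, so no descent to quadratic forms or polarization is needed to establish linearity in $A$.
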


\begin{proof}  
For every $A\in B(\h)_+$ we define an operator $L_A \colon C_c(X,\RR)\to B(\kk)_h$ by $L_A(f)=L(f\otimes A)$. 
Since $L$ is positive, it follows that $L_A(C_c(X,\RR)_+)\succeq 0$. By Proposition \ref{riesz} there exists 
a unique Borel positive operator-valued measure $E_A$ such that $L_A(f)=\int{f\;dE_A}$ for all $f\in C_c(X,\RR)$.  
Let us define a map $$m \colon \B(X)\to \LL(B(\h)_h, B(\kk)_h ), \quad m(\Delta)(A)=E_{A_+}(\Delta)-E_{A_-}(\Delta).$$
For every $f\in C_c(X,\RR)$ and $A,B\in B(\h)_+$ we have
\beqn
\int{f\; dE_{A+B}}&=& L_{A+B}(f)= L(f\otimes (A+ B)) =\\ 
&=& L(f \otimes A)+ L(f \otimes B) = L_A(f)+ L_B(f)= \\ 
&=& \int{f\; dE_{A}}+\int{f\; dE_{B}} = \int{f\; d(E_{A}+ E_B)}.
\eeqn
It follows that $E_{A+B}=E_{A}+E_B$ by the uniqueness part of Proposition \ref{riesz}.
For general $A,B \in B(\h)_h$ we deduce that $m(\Delta)(A+B)-m(\Delta)(A)-m(\Delta)(B)
= (E_{(A+B)_+}(\Delta) - E_{(A+B)_-}(\Delta)) - (E_{A_+}(\Delta) - E_{A_-}(\Delta)) - (E_{B_+}(\Delta) - E_{B_-}(\Delta))
= E_{(A+B)_+ +A_-+B_-}(\Delta)-E_{(A+B)_- +A_++B_+}(\Delta)=0$.
Therefore, $m(\Delta)$ is additive for every $\Delta\in \B(X)$.
Similarly we show that it is also homogeneous.

We claim that $m(\Delta)$ is bounded for every $\Delta\in \B(X)$. 
Pick an increasing sequence of compact $\Delta_i \in \B(X)$ such that $X=\bigcup_i \Delta_i$. 
By Urysohn's Lemma there exist functions $u_i \in C_c(X,[0,1])$ such that $u_i|_{\Delta_i}\equiv 1$.
For every $A \in B(\h)_+$ we have that $E_A(\Delta_i)=\int \chi_{\Delta_i} \; dE_A\le \int u_i \; dE_A =L_A(u_i)=L(u_i \otimes A)$ 
which implies that $\Vert E_A(\Delta_i) \Vert \le 
\Vert L \Vert \; \Vert u_i \otimes A \Vert= \Vert L \Vert \; \Vert A \Vert$.
Furthermore, $(E_A)_x(\Delta) \le (E_A)_x(X)=\sup_i (E_A)_x(\Delta_i)$ for every $x \in \kk$, which implies that
$$\Vert E_A(\Delta) \Vert\le \sup_i \Vert E_A(\Delta_i) \Vert \le \Vert L \Vert \; \Vert A \Vert.$$
For non-positive $A \in B(\h)$ we need an additional factor 2 because
$$\Vert m(\Delta)(A)\Vert \le \Vert E_{A_+}(\Delta) \Vert+\Vert E_{A_-}(\Delta)\Vert \leq 2\Vert L \Vert \Vert A \Vert.$$
Therefore, the set function $m$ is a non-negative operator-valued measure. 

To prove that $m$ is a representing measure for $L$, 
it suffices by linearity to prove that $L(f\otimes A)=\int{(f\otimes A)\, dm}$ 
for all $f\otimes A\in C_c(X,\RR) \otimes B(\h)_+$. This follows from
$$L(f\otimes A)=L_A(f)=\int{f\; dE_{A}}= \int{f\; dm_A}=\int{(f\otimes A)\, dm}.$$

The uniqueness of $m$ follows from the uniqueness of the measures $E_A$ for every $A\in B(\h)_+$.
\end{proof}

\section{Haviland's Theorem}
\label{sechav}

Theorem \ref{haviland} extends Theorem \ref{hav} to operator polynomials. Here we will restrict ourselves to $\kk=\RR$.

\begin{theorem}
\label{haviland}
For a linear map $L \colon \RR[\x] \otimes B(\h)_h \to \RR$ and a closed set $X$ in $\RR^d$,
the following are equivalent:
\begin{enumerate}
\item There exists a non-negative Borel measure $m \colon \B(X) \to \LL(B(\h)_h, \RR)$ such that
$L(F)=\int F \, dm$ for every $F \in \RR[\x] \otimes B(\h)_h$.
\item $L(F) \ge 0$ for every $F \in \RR[\x] \otimes B(\h)_h$ such that $F \succeq 0$ on $X$.
\end{enumerate}
\end{theorem}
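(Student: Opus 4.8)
The implication (1) $\Rightarrow$ (2) is the routine direction. Writing $m_A(\Delta)=m(\Delta)(A)$, each $m_A$ with $A\in B(\h)_+$ is a positive (scalar, since $\kk=\RR$) measure, so every $m(\Delta)$ is a positive functional on $B(\h)_h$. Given $F\in\RR[\x]\otimes B(\h)_h$ with $F\succeq 0$ on $X$, I would fix an exhaustion $u_n\in C_c(X,[0,1])$ with $u_n\uparrow 1$; then $u_nF\in C_c(X,\RR)\otimes B(\h)_h$ and still $u_nF\succeq 0$ pointwise. Approximating $u_nF$ uniformly by simple functions $\sum_j\chi_{\Delta_j}B_j$ with $B_j\succeq 0$ gives $\int u_nF\,dm\ge 0$, and dominated convergence with respect to each $m_{A_i}$ gives $\int u_nF\,dm\to\int F\,dm=L(F)$, whence $L(F)\ge 0$.

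For (2) $\Rightarrow$ (1) I would identify $\RR[\x]\otimes B(\h)_h$ and $C_c(X,\RR)\otimes B(\h)_h$ with spaces of $B(\h)_h$-valued functions on $X$ and set $E=(\RR[\x]+C_c(X,\RR))\otimes B(\h)_h$, $A=\RR[\x]\otimes B(\h)_h\subseteq E$, and $C=\{F\in E: F(x)\succeq 0\text{ for all }x\in X\}$. Hypothesis (2) says precisely that $L$ is nonnegative on $A\cap C$. The point enabling an extension is that $A$ majorizes $E$ relative to $C$: given $F=P+\Phi$ with $P\in A$ and $\Phi\in C_c(X,\RR)\otimes B(\h)_h$, the bound $\Phi(x)\preceq\|\Phi\|_\infty I_\h$ shows $G:=P+\|\Phi\|_\infty(1\otimes I_\h)\in A$ satisfies $G\succeq F$ on $X$. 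By the M.~Riesz extension theorem, $L$ extends to a linear $\hat L\colon E\to\RR$ with $\hat L(C)\ge 0$.

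Next I would show that the restriction of $\hat L$ to $C_c(X,\RR)\otimes B(\h)_h$ is bounded, which is what lets us invoke the operator Riesz theorem. For such $\Phi$ the inequalities $\|\Phi\|_\infty(1\otimes I_\h)\pm\Phi\succeq 0$ on $X$ give $|\hat L(\Phi)|\le\|\Phi\|_\infty\,\hat L(1\otimes I_\h)=\|\Phi\|_\infty\,L(1\otimes I_\h)$, with $L(1\otimes I_\h)\ge 0$ by (2). Thus $\hat L$ restricts to a positive bounded map, and Proposition \ref{dobrakov} (with $\kk=\RR$, $X$ being a closed subset of $\RR^d$ and hence locally compact, $\sigma$-compact and metrizable) yields a unique non-negative operator-valued measure $m\colon\B(X)\to\LL(B(\h)_h,\RR)$ with $\hat L(\Phi)=\int\Phi\,dm$ for all $\Phi\in C_c(X,\RR)\otimes B(\h)_h$.

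The remaining step, verifying that $m$ represents $L$ on all of $A$, is the one I expect to be the main obstacle, since it forces us to integrate the unbounded polynomial entries against a measure built only from compactly supported data. Fix the coercive polynomial $\theta=1+x_1^2+\cdots+x_d^2$; every $p\in\RR[\x]$ satisfies $|p|\le c\,\theta^k$ on $\RR^d$ for suitable $c,k$, so each polynomial is dominated by a nonnegative one. For $p\ge 0$ on $X$ and $A\in B(\h)_+$, monotone convergence along $u_n\uparrow 1$ gives $\int p\,dm_A=\lim_n\hat L(u_np\otimes A)\le L(p\otimes A)$; choosing $u_n\equiv 1$ on $\{|x|\le n\}$ and using $(1-u_n)p\le\varepsilon_n\theta^k$ with $\varepsilon_n\to 0$, together with $\int\theta^k\,dm_A\le L(\theta^k\otimes A)<\infty$, forces $\hat L((1-u_n)p\otimes A)\to 0$ and hence the equality $\int p\,dm_A=L(p\otimes A)$. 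Domination by $\theta^k$ extends this to arbitrary $p$, and linearity in $A=A_+-A_-$ together with bilinearity over the tensor product upgrades it to $L(F)=\int F\,dm$ for every $F\in\RR[\x]\otimes B(\h)_h$. The delicate accounting lies entirely in controlling the polynomial tails through $\theta^k$, where the finiteness of $L(\theta^k\otimes A)$ and $L(1\otimes I_\h)$ (equivalently, that the representing measure has finite total mass) is what makes the argument go through.
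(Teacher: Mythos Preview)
Your argument is correct and follows the paper's approach: descend to functions on $X$, extend $L$ by the M.~Riesz theorem to a space containing $C_c(X,\RR)\otimes B(\h)_h$, note boundedness there, apply Proposition~\ref{dobrakov}, and then control the polynomial tails to upgrade the integral representation from $C_c$ to polynomials. The only differences are cosmetic: the paper extends to the larger space $C'(X,\RR)\otimes B(\h)_h$ of polynomially dominated continuous functions (rather than your $(\RR[\x]+C_c)\otimes B(\h)_h$) and handles the tail via Marshall's inequality $0\le f-f_i\le\tfrac{1}{i}(f+\hat p)^2$ in place of your $(1-u_n)p\le\varepsilon_n\theta^k$---for which, note, you need $2k>\deg p$ strictly so that $p/\theta^k\to 0$ at infinity.
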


For $\h=\RR$, this is \cite[Theorems 3.1.2 and 3.2.2]{Mar}. 

\begin{proof}
The nontrivial direction is that (2) implies (1).
Let $A_0$ be the range of the natural mapping $\;\hat{}\; \colon \RR[\x] \to C(X,\RR)$.
By (2), $\bar{L}(\hat{p} \otimes B) := L(p \otimes B)$
is a well-defined positive linear functional on $A_0 \otimes B(\h)_h$. The set
$$C'(X,\RR):=\left\{f\in C(X,\RR)\mid \exists p\in \RR[\x]: \left|f\right|\leq \left|\hat p\right|\;\text{on}\; X\right\}$$
is clearly a vector space which contains $C_c(X,\RR)$. Since $A_0$ is cofinal in $C'(X,\RR)$, also $A_0 \otimes B(\h)_h$ 
is cofinal in $C'(X,\RR)\otimes B(\h)_h$. By the M.~Riesz extension theorem, $\bar{L}$ extends (non-uniquely) 
to a positive linear functional on $C'(X,\RR)\otimes B(\h)_h$ which will also be denoted by $\bar{L}$. Note, that 
$\bar{L}|_{C_c(X,\RR) \otimes B(\h)_h}$ is bounded, since for every $F\in C_c(X,\RR) \otimes B(\h)_h$ we have
$F\preceq \left\|F\right\|_{\infty}\otimes \mathrm{Id}$ and hence 
$\bar{L}(F)\leq \bar{L}(\left\|F\right\|_{\infty}\otimes \mathrm{Id})=\bar L(1\otimes \mathrm{Id})\left\|F\right\|_{\infty}.$
By Proposition \ref{dobrakov}, there exists a non-negative operator-valued Borel measure $m \colon \B(X) \to \LL(B(\h)_h, \RR)$
such that
\begin{equation}
\tag{*}
\bar{L}(F)=\int {F\; dm}
\end{equation}
for all $F \in C_c(X,\RR) \otimes B(\h)_h$. We have to show that (*) holds for all $F\in C'(X,\RR)\otimes B(\h)_h$ 
(and hence for all $F \in A_0 \otimes B(\h)_h$). Clearly, it suffices to show that (*) holds for every
$F=f \otimes B$ where $f \in C'(X,\RR)_+$ and $B \in B(\h)_+$. 

Write $p=x_1^2+\ldots+x_n^2$. By the proof of Claim 3 of \cite[Theorem 3.2.2]{Mar} there exists 
an increasing sequence $f_i \in C_c(X,\RR)_+$ such that $0 \le f-f_i \le \frac{1}{i} (f+\hat{p})^2$ for every $i$.
Thus, $$\bar{L}(f \otimes B)=\bar{L}_B(f)=\lim_{i\to\infty} \bar{L}_B(f_i)=\lim_{i\to\infty} \int f_i \, dE_B
\overbrace{=}^{(\ast)} \int f \, dE_B = \int f \otimes B\, dm.$$
Note that in this case $E_B$ are the usual positive Borel measures. Therefore, the existence of $\int f \, dE_B$ and $(\ast)$ 
follow from the monotone convergence theorem and the fact that the sequence $\int f_i \;dE_B$ is bounded above by $\bar{L}(f\otimes B)$.
\end{proof}

\begin{remark}
\label{fdhavi}
If the Hilbert space $\h$ in Theorem \ref{haviland} is finite-dimensional, then we can identify 
$\LL(B(\h)_h, \RR)$ with $B(\h)_h$ via the trace map $\operatorname{tr}$. 
The representation $L(F)=\int F \, dm$ then reads as  $L(F)=\int \operatorname{tr} (F \, dE)$
where $E \colon \B(X) \to B(\h)_h$ is the positive operator-valued measure that corresponds to $m$
in the above identification.
\end{remark}

To obtain versions of Hamburger, Stieltjes and Hausdorff moment problems for operator polynomials, we combine 
Theorem \ref{haviland} with the following:

\begin{proposition}
\label{fejer}
For every operator polynomial $F \in \RR[x] \otimes B(\h)_h$ we have the following equivalences:
\begin{enumerate}
\item $F(a) \succeq 0$ for every $a \in \RR$ iff $F$ is a sum of hermitian squares of polynomials from
$\RR[x] \otimes B(\h)$.
\item $F(a) \succeq 0$ for every $a \in [0,\infty)$ iff $F=\sigma_0+ x \sigma_1$ where 
$\sigma_0,\sigma_1$ are sums of hermitian squares of polynomials from
$\RR[x] \otimes B(\h)$.
\item $F(a) \succeq 0$ for every $a \in [0,1]$ iff $F=\sigma_0+ x \sigma_1+(1-x) \sigma_2+x(1-x) \sigma_3$ where 
$\sigma_i$ are sums of hermitian squares of polynomials from
$\RR[x] \otimes B(\h)$.
\end{enumerate}
\end{proposition}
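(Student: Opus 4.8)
The plan is to reduce the whole proposition to part (1) and to treat the backward implications uniformly at the outset. In each of (1)--(3) the direction ``$\Leftarrow$'' is immediate: if $F$ has the displayed form, then at any point $a$ of the relevant set each $\sigma_i(a)$ is a finite sum of operators $p(a)^* p(a)\succeq 0$, while the scalar prefactors $1,a,1-a,a(1-a)$ are all $\ge 0$ there, so $F(a)\succeq 0$. Hence only the forward implications require work, and everything rests on the forward direction of (1).

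For the forward direction of (1) I would invoke the operator Fej\'er--Riesz theorem. Passing to the unit circle via the Cayley transform $x=i\frac{1-z}{1+z}$ turns a $B(\h)_h$-valued polynomial $F$ with $F\succeq0$ on $\RR$, after multiplying by a suitable power of $(1+z)(1+z^{-1})$, into a $B(\h)_h$-valued Laurent (trigonometric) polynomial that is $\succeq 0$ on $\{|z|=1\}$. The operator Fej\'er--Riesz theorem factors the latter as $G^*G$ for an analytic operator polynomial $G$; transforming back and clearing the $(1+z)$-denominators expresses $F$ as a finite sum of hermitian squares in $\RR[x]\otimes B(\h)$. I expect this to be the main obstacle: in infinite dimensions the leading coefficient of $F$ need not be invertible, so the naive degree reduction (extract a square root of the top coefficient, subtract, and induct) breaks down, which is exactly why the conclusion is phrased as a \emph{sum} of hermitian squares rather than a single square. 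A more self-contained alternative is an induction on $\deg F$ via Schur complements, handling the non-invertible leading coefficient by a perturbation and limiting argument; in either case the analytic content of the proposition is concentrated here.

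Granting (1), part (2) follows from the classical even/odd trick. If $F\succeq0$ on $[0,\infty)$, then $F(x^2)\succeq0$ for all $x\in\RR$, so by (1) we may write $F(x^2)=\sum_j p_j^* p_j$. Splitting each $p_j$ into its even and odd parts, $p_j(x)=q_j(x^2)+x\,r_j(x^2)$ with $q_j,r_j\in\RR[x]\otimes B(\h)$, gives
\[
\sum_j p_j^* p_j=\sum_j\big(q_j(x^2)^*q_j(x^2)+x^2\,r_j(x^2)^*r_j(x^2)\big)
+x\sum_j\big(q_j(x^2)^*r_j(x^2)+r_j(x^2)^*q_j(x^2)\big).
\]
Since the left-hand side is the even polynomial $F(x^2)$, the odd part on the right vanishes identically; substituting $y=x^2$ then yields $F(y)=\sigma_0+y\,\sigma_1$ with $\sigma_0=\sum_j q_j^* q_j$ and $\sigma_1=\sum_j r_j^* r_j$, as required.

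Finally, (3) reduces to (2) via the M\"obius substitution $x=\frac{t}{1+t}$, which carries $[0,\infty)$ onto $[0,1)$ and satisfies $1-x=\frac{1}{1+t}$. Writing $m=\deg_x F$, the operator polynomial $H(t):=(1+t)^m F\!\big(\frac{t}{1+t}\big)$ lies in $\RR[t]\otimes B(\h)_h$, has $\deg_t H\le m$, and is $\succeq0$ on $[0,\infty)$, so by (2) we have $H=\tau_0+t\,\tau_1$ with $\tau_0,\tau_1$ sums of hermitian squares and $\deg_t\tau_0\le m$, $\deg_t\tau_1\le m-1$. Substituting back $t=\frac{x}{1-x}$ (so $1+t=\frac1{1-x}$) gives $F(x)=(1-x)^m H\!\big(\frac{x}{1-x}\big)$, and each hermitian square $\ell^*\ell$ occurring in $\tau_0,\tau_1$ becomes, after clearing the $(1-x)$-denominators, a polynomial hermitian square multiplied by a nonnegative power of $(1-x)$ (and, in the $t\,\tau_1$ part, by an extra factor $x$). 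The degree bounds ensure that all these powers of $(1-x)$ are indeed nonnegative, and sorting the resulting products according to the parity of that power distributes them among the four terms $\sigma_0,\ x\sigma_1,\ (1-x)\sigma_2,\ x(1-x)\sigma_3$, which is exactly the claimed representation. The only point needing care is this final parity bookkeeping.
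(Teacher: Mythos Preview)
Your overall strategy matches the paper's: both prove (1) by passing to the circle and invoking the operator Fej\'er--Riesz theorem, and both prove (2) by the substitution $x\mapsto x^2$ and the even/odd split. For (3) the paper simply cites the matrix case (\cite{ds}, \cite{sav-sch}), so your explicit M\"obius reduction $x=t/(1+t)$ with the degree bookkeeping is more self-contained and is fine.

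One point deserves correction in your discussion of (1). You identify the main obstacle as the possibly non-invertible leading coefficient in infinite dimensions, and use this to explain why the conclusion is a \emph{sum} of squares rather than a single one. That is not the issue here: Rosenblum's operator Fej\'er--Riesz theorem already delivers a single factorization $P^\ast P$ over $B(\h_\CC)$ without any invertibility hypothesis. The reason the paper ends up with a sum (of exactly two squares, in fact) is that $\h$ is a \emph{real} Hilbert space. One must complexify to $\h_\CC$ to apply Fej\'er--Riesz, and then split the complex factor into its real and imaginary operator parts $H+iK$; the cross term $i(KH^\ast-HK^\ast)$ vanishes because $F$ has real coefficients, leaving $F=HH^\ast+KK^\ast$. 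Your Cayley-transform route is equivalent to the paper's $x=\tan t$ substitution, but your sketch omits this real/complex passage, and without it you do not land back in $\RR[x]\otimes B(\h)$. This is the only substantive gap; once you insert the complexification and the real/imaginary splitting, your argument is complete and essentially coincides with the paper's.
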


In the proof we use the operator version of the Fejér-Riesz theorem, see \cite{rosenblatt} in the matrix case, \cite{rosenblum} in the
operator case and \cite[Theorem 2.1]{dr} for a survey. Since $\h$ is a real Hilbert space, while the Fejér-Riesz theorem
works only for complex Hilbert spaces, we have to complexify our $\h$ to $\h_\CC$.
From the proof it will also follow, that $F$ in (1) and  $\sigma_0, \sigma_1$ in (2) can be chosen 
as a sum of at most two hermitian squares.

\begin{proof}
\begin{enumerate}
\item By assumption, $\deg F=2n$ for some $n$.
Replacing $x=\tan t$, we get
\begin{equation*}
F(x)=(\cos t)^{-2n} \tilde{F}(\cos t,\sin t)
\end{equation*}
where $\tilde{F}(u,v):=F\left(\frac{v}{u}\right)u^{2n}$ is homogeneous and  $\tilde{F}\succeq 0$ on $\RR^2$. Clearly,
\begin{equation*}
\tilde{F}(\cos t,\sin t)=u(e^{2it})
\end{equation*}
for some operator Laurent polynomial $u$, i.e., $u(z)=\sum_{k=-n}^{n}A_k z^k$ and $A_k\in B(\h_\CC)=B(\h)_\CC$.
Since $u(e^{it})\succeq 0$ for $t\in\RR$, it follows by the Fejér-Riesz theorem that $u(e^{it})=P(e^{it})P^\ast(e^{-it})$, 
where $P$ is a usual operator polynomial, i.e., $P(z)=\sum_{k=0}^n B_k z^k$ and $B_k\in B(\h)_\CC$.
Hence 
\begin{equation*}
\tilde{F}(\cos t,\sin t) =  G(\cos t,\sin t)G^{\ast}(\cos t,\sin t),
\end{equation*}
where
\beqn
G(\cos t,\sin t)&=&P(e^{2it})e^{-itn}= \sum_{k=0}^n B_k e^{2itk-itn} =\sum_{k=0}^n B_k (e^{it})^k (e^{-it})^{n-k} =\\
&=& \sum_{k=0}^n (B_k'+iB_k'') (\cos t + i\sin t)^k (\cos t - i \sin t)^{n-k} =\\
&=& H(\cos t,\sin t)+i K(\cos t,\sin t),
\eeqn
with $B_k', B_k''\in B(\h)$ and $H,K\in \RR[u,v] \otimes B(\h)$ are homogeneous polynomials of degree $n$. 
It follows that
\begin{equation*}
\tilde{F}(\cos t,\sin t) =  H(\cos t,\sin t)H^{\ast}(\cos t,\sin t)+K(\cos t,\sin t)K^{\ast}(\cos t,\sin t).
\end{equation*}
Note that $i(-H(\cos t,\sin t)K^\ast(\cos t,\sin t)+K(\cos t,\sin t)H^\ast(\cos t,\sin t))=0$ 
since the coefficients of $\tilde{F}$ are ``real'', i.e., they belong to $B(\h)$.
Therefore,
\begin{equation*}
F(x)=H(1,x)H^{\ast}(1,x)+K(1,x)K^{\ast}(1,x).
\end{equation*}
\item From $F|_{\RR_+}\succeq 0$ it follows $G(a):=F(a^2)\succeq 0$ on $\RR$. By (1) 
$$G(a)=\sum_i P_i(a)P^{\ast}_i(a)=\sum_i(R_i(a^2)+aQ_i(a^2))(R^\ast_i(a^2)+aQ^{\ast}_i(a^2))=$$ 
$$\sum_{i}R_i(a^2)R_i^{\ast}(a^2) + a\sum_i (Q_i(a^2)R^{\ast}_i(a^2)+R_i(a^2)Q^{\ast}_i(a^2))+a^2\sum_i Q_i(a^2)Q_i^{\ast}(a^2) $$
Since $G(a)=G(-a)$ we get $$F(a^2)=G(a)=\frac{1}{2}\left(\sum_{i}R_i(a^2)R_i^{\ast}(a^2)+a^2\sum_i Q_i(a^2)Q_i^{\ast}(a^2)\right)$$ and with substitution $t=a^2$ 
the result follows. 
\item The proof is the same as in the matrix case, see \cite[Theorem 2.5]{ds} or \cite[Section 7]{sav-sch} .
\end{enumerate}
\end{proof}

Now we can explicitly formulate Hamburger's, Stieltjes' and Hausdorff's theorems for matrix polynomials.

\begin{corollary}
\label{matrixham}
Let $L$ be a linear functional on $\RR[x] \otimes S_n(\RR)$. For each $p\in \NN_0$  write 
$S_{p}:=[L(x^{p}E_{k,l})]_{k,l=1,\ldots,n}$ where $E_{k,l}$ are coordinate matrices. Then 
\begin{enumerate}
\item $L$ has an integral representation (in the sense of Remark \ref{fdhavi}) with a positive operator-valued measure $E$
whose support is contained  in $\RR$ iff $\left[S_{i+j}\right]_{i,j=0,\ldots,m}$ is positive semidefinite for every 
$m\in \NN_0$,
\item $L$ has an integral representation  with a positive operator-valued measure $E$ whose support is contained in  $[0,\infty)$
iff $\left[S_{i+j}\right]_{i,j=0,\ldots,m}$ and $\left[S_{i+j+1}\right]_{i,j=0,\ldots,m}$ are positive semidefinite for every 
$m\in \NN_0$, 
\item $L$ has an integral representation  with a positive operator-valued measure $E$ whose support is contained in  $[0,1]$
iff $\left[S_{i+j}\right]_{i,j=0,\ldots,m}$, $\left[S_{i+j+1}\right]_{i,j=0,\ldots,m}$, $\left[S_{i+j}-S_{i+j+1}\right]_{i,j=0,\ldots,m}$ and 
$\left[S_{i+j+1}-S_{i+j+2}\right]_{i,j=0,\ldots,m}$ are positive semidefinite for every $m\in \NN_0$.
\end{enumerate}
\end{corollary}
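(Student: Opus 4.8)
The plan is to combine the finite-dimensional operator Haviland theorem (Theorem \ref{haviland} read through Remark \ref{fdhavi}, taking $\h=\RR^n$ so that $B(\h)_h=S_n(\RR)$ and $\kk=\RR$) with the Fej\'er--Riesz-type decompositions of Proposition \ref{fejer}, and then to translate the resulting sum-of-hermitian-squares positivity conditions into positive semidefiniteness of block Hankel matrices.

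First I would record the reduction coming from Haviland's theorem. By Theorem \ref{haviland} and Remark \ref{fdhavi}, $L$ admits an integral representation with a positive operator-valued measure $E$ supported in a closed set $X\subseteq\RR$ if and only if $L(F)\ge 0$ for every $F\in\RR[x]\otimes S_n(\RR)$ with $F\succeq 0$ on $X$. For $X=\RR$, $[0,\infty)$, $[0,1]$, Proposition \ref{fejer} describes exactly this positivity cone: it is generated, respectively, by the hermitian squares $PP^*$; by $PP^*$ and $xPP^*$; and by $PP^*$, $xPP^*$, $(1-x)PP^*$, $x(1-x)PP^*$, where $P$ ranges over $\RR[x]\otimes M_n(\RR)$. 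By linearity, positivity of $L$ on each cone is equivalent to nonnegativity of $L$ on these generating families, i.e. to $L(x^qPP^*)\ge 0$ for all $P$ and the relevant weights.

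The core of the argument is then a single linear-algebra computation, which I would isolate as a lemma: for every $m$ and every $P(x)=\sum_{k=0}^m C_kx^k$ with $C_k\in M_n(\RR)$,
$$L(x^qPP^*)=\sum_{r=1}^n (c^{(r)})^T\,\big[S_{\,i+j+q}\big]_{i,j=0,\ldots,m}\,c^{(r)},$$
where $c^{(r)}=((C_0)_{\cdot r},\ldots,(C_m)_{\cdot r})$ stacks the $r$-th columns of the coefficient matrices. This follows by writing the coefficient of $x^{k+l}$ in $PP^*$ as $\sum C_kC_l^T$ (so that $PP^*$ is genuinely symmetric and hence lies in $\RR[x]\otimes S_n(\RR)$), using $C_kC_l^T=\sum_r (C_k)_{\cdot r}(C_l)_{\cdot r}^T$, and applying the identity $L(x^pD)=\tr(D\,S_p)$ for symmetric $D$, which is just the definition of $S_p$ together with bilinearity. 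Reading this identity in both directions gives the equivalence: choosing $P$ whose coefficient matrices have a single nonzero column, $C_k=w_ke_s^T$, isolates the form $w\mapsto w^T[S_{i+j+q}]w$, so nonnegativity forces each block Hankel matrix $[S_{i+j+q}]_{i,j=0,\ldots,m}$ to be positive semidefinite; conversely, positive semidefiniteness makes every sum of such forms nonnegative. Taking $q=0$ handles $L(PP^*)$, $q=1$ handles $L(xPP^*)$, and the combinations $q\in\{0,1\}$ and $q\in\{1,2\}$ produce the difference matrices $[S_{i+j}-S_{i+j+1}]$ and $[S_{i+j+1}-S_{i+j+2}]$ from $L((1-x)PP^*)$ and $L(x(1-x)PP^*)$.

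Assembling the three parts is then routine: for $X=\RR$ only the squares appear, giving condition (1); for $X=[0,\infty)$ the factor $x$ adds the shifted Hankel matrices of condition (2); and for $X=[0,1]$ the factors $1-x$ and $x(1-x)$ add the two difference conditions of (3). The step I expect to be the main obstacle is purely bookkeeping: keeping the vectorization and the symmetry conventions for $S_p$ consistent, in particular ensuring that $L(x^pD)=\tr(D\,S_p)$ is invoked only for symmetric $D$ (which $PP^*$ always supplies) so that every polynomial in sight stays inside $\RR[x]\otimes S_n(\RR)$. There is no genuine analytic difficulty, since the hard work has already been carried out in Theorem \ref{haviland} and Proposition \ref{fejer}.
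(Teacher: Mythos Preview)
Your proposal is correct and follows exactly the route the paper intends: the corollary is stated immediately after the sentence ``To obtain versions of Hamburger, Stieltjes and Hausdorff moment problems for operator polynomials, we combine Theorem \ref{haviland} with the following [Proposition \ref{fejer}]'', and no further proof is given. You have simply spelled out the block-Hankel bookkeeping (the identity $L(x^q PP^\ast)=\sum_r (c^{(r)})^T[S_{i+j+q}]c^{(r)}$ and its converse via rank-one coefficient matrices) that the paper leaves to the reader.
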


The operator version of Corollary \ref{matrixham} is less straightforward. 
For the Hamburger's theorem one has to require that for
every $m\in\NN_0$ and every tuple of operators $(A_0,\ldots,A_m) \in B(\h)$, the matrix 
$$\left[L\left(x^{i+j}A_i^\ast A_j\right) \right]_{i,j=0,\ldots,m}$$ is positive semidefinite.
For the Stieltjes' theorem we require that for
every $m\in\NN_0$ and every tuple of operators $(A_0,\ldots,A_m) \in B(\h)$, the matrices
$$\left[L\left(x^{i+j}A_i^\ast A_j\right) \right]_{i,j=0,\ldots,m}\;\mathrm{and}\;\left[L\left(x^{i+j+1}A_i^\ast A_j\right) \right]_{i,j=0,\ldots,m}$$
are positive semidefinite, while for Hausdorff's theorems we additionaly require that
$$\left[L\left(\left(x^{i+j}-x^{i+j+1}\right) A_i^\ast A_j\right)\right]_{i,j=0,\ldots,m}\; \mathrm{and}\;\left[L\left(\left(x^{i+j+1}-x^{i+j+2}\right) A_i^\ast A_j\right) \right]_{i,j=0,\ldots,m}$$
are positive semidefinite.

\medskip

The problem with the extension of Theorem \ref{haviland} to $\kk \ne \RR$ is that 
M. Riesz extension theorem is known to fail in general. However, if the mapping $L$ is completely positive
then we can use the following version of Arveson's extension theorem.

\begin{proposition}
\label{arveson}
Suppose $(E,K_1(E),K_2(E),\ldots)$ is a real matrix ordered vector space.
Let $E_0$ be a cofinal subspace of $E$. Let $\kk$ be a real Hilbert space and
$L \colon E_0 \rightarrow B(\kk)_h$ a completely positive map from the matrix ordered space $E_0$ to $B(\kk)_h$. 
Then there exists a completely positive map $L' \colon E\rightarrow B(\kk)_h$ such that $L'|_{E_0}=L$.
\end{proposition}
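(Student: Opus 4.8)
The plan is to adapt the classical proof of Arveson's extension theorem to the present order-theoretic setting: first reduce to a finite-dimensional $\kk$, where complete positivity translates into the positivity of a single scalar functional on the matrix-ordered space and the M.~Riesz extension theorem (already used in the proof of Theorem \ref{haviland}) applies, and then recover the general case by a weak-operator compactness limit over the finite-dimensional subspaces of $\kk$. Throughout, complete positivity of a map $\Phi\colon E_0\to B(\kk)_h$ means that for every $n$, every $[e_{ij}]\in K_n(E_0)$ and all $\xi_1,\dots,\xi_n\in\kk$ one has $\sum_{i,j}\langle\Phi(e_{ij})\xi_j,\xi_i\rangle\ge 0$. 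Cofinality of $E_0$ will be used twice: to extend a positive functional by M.~Riesz, and to produce the uniform bounds needed to pass to the limit.

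I would first treat $\kk=\RR^n$, so that $B(\kk)_h$ is the space of symmetric $n\times n$ matrices. Let $E_{ij}$ be the matrix units and $\delta_1,\dots,\delta_n$ the standard basis, and to a linear map $\Phi\colon E\to B(\RR^n)_h$ associate the functional $s_\Phi\colon M_n(E)\to\RR$ determined by $s_\Phi\big(\sum_{i,j}E_{ij}\otimes e_{ij}\big)=\sum_{i,j}\langle\Phi(e_{ij})\delta_j,\delta_i\rangle$. Then $s_L$ is nonnegative on $K_n(E_0)$ whenever $L$ is completely positive (test $n$-positivity against the single vector $(\delta_1,\dots,\delta_n)$), while conversely any functional $s$ nonnegative on $K_n(E)$ reconstructs a map $L'\colon E\to B(\RR^n)_h$ by $\langle L'(e)\delta_j,\delta_i\rangle=s(E_{ij}\otimes e)$. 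Here the compatibility axiom $\gamma^{*}K_m(E)\gamma\subseteq K_n(E)$ of a matrix ordered vector space is exactly what forces this reconstructed $L'$ to be completely positive and not merely $n$-positive: testing $m$-positivity on $[e_{kl}]\in K_m(E)$ against $\eta_1,\dots,\eta_m\in\RR^n$ rewrites the relevant sum as $s(R^{*}[e_{kl}]R)$ for the scalar matrix $R=[\eta_k^i]\in M_{m,n}(\RR)$, and $R^{*}[e_{kl}]R\in K_n(E)$, so that $s\ge 0$ applies.

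With this dictionary in place the finite-dimensional case is immediate: since $E_0$ is cofinal in $E$, the subspace $M_n(E_0)$ is cofinal in $\big(M_n(E),K_n(E)\big)$ (an arbitrary self-adjoint $[e_{ij}]$ being dominated by a scalar multiple of a matrix order unit drawn from $E_0$, as is available in the intended applications), so M.~Riesz extends $s_L$ to a functional $s$ nonnegative on $K_n(E)$, and the reconstructed $L'$ is a completely positive extension of $L$ with $L'|_{E_0}=L$. For general $\kk$ I would run this for every finite-dimensional subspace $F\subseteq\kk$ with orthogonal projection $P_F$: the compression $e\mapsto P_F\,L(e)\,P_F$ is completely positive into $B(F)$, hence extends to a completely positive $L'_F\colon E\to B(F)\subseteq B(\kk)_h$. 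For each self-adjoint $e\in E$ cofinality yields $e_0,e_0'\in E_0$ with $-e_0'\le e\le e_0$, whence $-P_FL(e_0')P_F\le L'_F(e)\le P_FL(e_0)P_F$ and therefore $\|L'_F(e)\|\le\max\{\|L(e_0)\|,\|L(e_0')\|\}$, a bound independent of $F$. By Banach--Alaoglu and Tychonoff's theorem the net $(L'_F)_F$, indexed by the finite-dimensional subspaces of $\kk$ ordered by inclusion, then has a cluster point in the topology of pointwise weak-operator convergence; one sets $\langle L'(e)\xi,\eta\rangle=\lim_F\langle L'_F(e)\xi,\eta\rangle$ along a suitable subnet. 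Since complete positivity is preserved under weak-operator limits and $P_F\to I$ strongly gives $L'_F(e)\to L(e)$ weakly for $e\in E_0$, the limit $L'$ is completely positive and restricts to $L$ on $E_0$.

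I expect the main obstacle to be this last, infinite-dimensional, step: organizing the limit so that $L'(e)$ is a genuine bounded self-adjoint operator for every $e$, so that the limit remains completely positive at every matrix level rather than merely positive, and so that it still agrees with $L$ on $E_0$. The uniform norm bound supplied by cofinality is precisely what makes the compactness argument available, and the matrix-ordered compatibility axiom is what guarantees that complete positivity survives the passage to the limit. A secondary technical point, already needed in the finite-dimensional step, is to promote cofinality of $E_0$ in $E$ to cofinality of $M_n(E_0)$ in $M_n(E)$; this is where a matrix order unit lying in $E_0$, as in the applications to $\RR[\x]\otimes B(\h)_h$, is what I would invoke.
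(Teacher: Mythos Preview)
Your approach is sound but genuinely different from the paper's. The paper does not reduce to finite-dimensional $\kk$ at all: instead it uses Zorn's Lemma to reduce to the case $E=\RR x_0\oplus E_0$, then works directly in the auxiliary real vector space $G=(\kk\otimes\kk)_h\oplus\RR$, building a convex cone $C\subseteq G$ out of the data $\{[\alpha_{jl}x_0+x_{jl}]\in K_n(E)\}$, showing $(0,1)$ is an interior point while $(0,0)$ is not, and applying a separation theorem to produce a functional $f$ on $G$. Cofinality is used to show that the bilinear form on $\kk$ extracted from $f$ is bounded, hence comes from an operator $L_0(x_0)\in B(\kk)_h$; complete positivity of the one-step extension is then read off from $f(C)\ge 0$. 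This is the Powers argument \cite{Pow} transported to the real setting, and it handles all $\kk$ in one stroke without any limiting procedure. Your route---M.~Riesz on $M_n(E)$ for $\kk=\RR^n$, then a Banach--Alaoglu/Tychonoff limit over finite-dimensional $F\subseteq\kk$---is more modular and makes the role of the scalar extension theorem transparent, at the cost of the extra compactness step.

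One point deserves tightening. You need $M_n(E_0)$ cofinal in $(M_n(E)_h,K_n(E))$ and you resolve this by invoking a matrix order unit in $E_0$, which is not part of the hypothesis. In fact level-$1$ cofinality already suffices: given $[e_{jl}]\in M_n(E)_h$, pick for each pair $(j,l)$ an element $z_{jl}=z_{lj}\in K_1(E_0)$ with $z_{jl}\pm e_{jl}\in K_1(E)$, and set $[f_{jl}]:=\sum_j E_{jj}^T z_{jj}E_{jj}+\sum_{j<l}(E_{jj}+E_{jl})^T z_{jl}(E_{jj}+E_{jl})+\sum_{j<l}(E_{jj}+E_{ll})^T z_{jl}(E_{jj}+E_{ll})\in K_n(E_0)$; one checks $[f_{jl}]\pm[e_{jl}]\in K_n(E)$ by writing the difference as a sum of terms $V^T(z_{jl}\pm e_{jl})V$. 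This is exactly the construction the paper uses (in the codimension-one situation) to manufacture the needed $K_n$-dominance from level-$1$ data, and it removes your reliance on an order unit.
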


Proposition \ref{arveson} is very similar to \cite[Theorem 3.7.]{Pow}.
The differences are that our $E$ and $E_0$ are real vector spaces with trivial involution 
instead of complex vector spaces with general involution and that 
the codomain of our $L$ is bounded operators instead of (not necessarily bounded) sesquilinear forms.
We advice the reader to consult \cite[Section 11.1]{schbook} before continuing. 

\begin{proof} 
If $L=0$, put $L'=0$. Assume that $L\neq 0$. By Zorn's Lemma we may assume that $E=\RR x_0 \oplus E_0$ for some $x_0 \in E\setminus E_0$.
We consider the real $\ast$-vector space $\kk\otimes \kk$ with involution $(k_1\otimes k_2)^\ast=k_2\otimes k_1$.
Let  $G$ be the real vector space $(\kk\otimes \kk)_h\oplus \RR$ and let $C$ be the convex hull of  elements 
$$\left(\sum_{j,l=1}^n \alpha_{jl}k_l\otimes k_j, \sum_{j,l=1}^n \left\langle L(x_{jl}) k_l, k_j\right\rangle\right)
\in (\kk\otimes \kk)\oplus \RR$$
where $\alpha_{jl} \in \RR$, $x_{jl} \in E_0$ and $k_j \in \kk$ are such that 
$[\alpha_{jl} x_0+ x_{jl}]_{jl}\in K_n(E).$ 
It follows that $\alpha_{lj}=\alpha_{jl}$ and $x_{lj}=x_{jl}$ for every $j,l=1,\ldots,n$, hence $C \subseteq G$.

Next, we show that $(0,1)$ is an algebraic interior point of $C$ - i.e., for every
$(y,\lambda) \in G$ we will find $\delta>0$ such that $\gamma(y,\lambda)+(0,1) \in C$ for every $\gamma \in (0,\delta)$.
Since $L\neq 0$ and  $E_0$ is cofinal in $E$, there exist $x\in K_1(E_0)$, $k\in \kk$, such that $\left\langle L(x)k,k\right\rangle>0$.
Hence $(0,\left\langle L(x)k,k\right\rangle)\in C$ and with scaling we conclude $(0,\alpha)\in C$ for every $\alpha>0$.
Suppose that $y=\sum_{j,l=1}^n \alpha_{jl}k_l\otimes k_j$ where $[\alpha_{jl}]_{jl}\in M_n(\RR)_h$  and
$k_1,k_2,\ldots,k_n\in\kk$. Since $E_0$ is cofinal in $E$, there exist $z_{jl} \in K_1(E_0)$, $j,l=1,\ldots,n$,
such that $z_{jl} \pm \alpha_{jl} x_0 \in K_1(E)$. Set
$[x_{jl}]_{jl}:= \sum_j E_{jj}^T z_{jj} E_{jj}+\sum_{j<l} (E_{jj}+E_{jl})^T z_{jl}(E_{jj}+E_{jl})
+\sum_{j<l} (E_{jj}+E_{ll})^T z_{jl}(E_{jj}+E_{ll}) \in K_n(E_0)$
where $E_{jl}$ are coordinate matrices.
Clearly, $[\alpha_{jl} x_0+ x_{jl}]_{jl}=
[\alpha_{jl}]_{jl} x_0+ [x_{jl}]_{jl}= \sum_j E_{jj}^T (z_{jj}+\alpha_{jj}x_0) E_{jj}+
\sum_{j<l} (E_{jj}+E_{jl})^T (z_{jl}+\alpha_{jl}x_0)(E_{jj}+E_{jl})
+\sum_{j<l} (E_{jj}+E_{ll})^T (z_{jl}-\alpha_{jl} x_0)(E_{jj}+E_{ll}) \in K_n(E)$.
Write $\lambda_1:= \sum_{j,l=1}^n \left\langle L(x_{jl})k_l,k_j\right\rangle\geq 0$
and note that $(y,\lambda_1) \in C$. For every $0<\gamma<\min\left\{\frac{1}{\vert \lambda-\lambda_1\vert}, 1\right\}=:\delta$ we have
$\gamma(y,\lambda)+(0,1)=\gamma(y,\lambda_1)+(1-\gamma)\left(0,\frac{\gamma(\lambda-\lambda_1)+1}{1-\gamma} \right)\in C.$

On the other hand, $(0,0)$ is not an algebraic interior point in $C$. The proof is the same as in the
complex case, see \cite[Theorem 11.1.5]{schbook}. (Namely, if $(0,-\eps) \in C$ for some $\eps>0$
then we get a contradiction after a short computation.)

Now the separation theorem for convex sets, see e.g. \cite[Ch.~IV, Theorem 3.3]{Con},
gives us a linear functional $f \colon G \to \RR$ such that $f(C)\ge 0$. Since $(0,1)$ is 
in the interior of $C$, we have that $f((0,1))>0$, so we may assume that $f((0,1))=1$.
We claim that the bilinear form $M(k_1,k_2):=\frac12 f((k_1 \otimes k_2+k_2 \otimes k_1,0))$
is bounded. Namely, since $E_0$ is cofinal in $E$ we can pick $z \in K_1(E_0)$ such that
$z \pm x_0 \in K_1(E)$. By the definition of $C$, it follows that 
$(\pm k \otimes k,\langle L(z)k,k\rangle)\in C$ for every $k \in \kk$,
which implies that $\pm M(k,k)+\langle L(z)k,k\rangle=
\pm f((k \otimes k,0))+\langle L(z)k,k\rangle f((0,1))\ge 0$ for every $k \in \kk$.
Since $L(z)$ is bounded, the polarization identity implies that $M$ is also bounded.
By \cite[Ch.~II, Theorem 2.2]{Con}, there exists $L_0(x_0) \in B(\kk)_h$ such that
$\langle L_0(x_0)k_1,k_2)=M(k_1,k_2)$ for every $k_1,k_2 \in \kk$.

The mapping $L' \colon \RR x_0+E_0 \to B(\kk)_h$, $L'(\alpha x_0+z):=\alpha L_0(x_0)+L(z)$ clearly extends $L$.
To show that $L'$ is completely positive, pick any $n \in \NN$, $X \in K_n(E)$ and $k_1,\ldots,k_n \in \kk$.
Clearly, $X=[\alpha_{jl} x_0+x_{jl}]_{jl}$ for some $[\alpha_{jl}]_{jl} \in M_n(\RR)_h$ and $[x_{jl}]_{jl} \in M_n(E)_h$.
If $y=\sum_{j,l=1}^n \alpha_{jl} k_l \otimes k_j$ and 
$\lambda =\sum_{j,l=1}^n \langle L(x_{jl})k_l,k_j \rangle$ then
$$\sum_{j,l=1}^n \langle (L' \otimes \Id_{M_n(\RR)})(X)k_l,k_j \rangle=
\sum_{j,l=1}^n \langle L'(\alpha_{jl} x_0+x_{jl})k_l,k_j \rangle=$$
$$=\sum_{j,l=1}^n \alpha_{jl} \langle L(x_0)k_l,k_j \rangle+\sum_{j,l=1}^n \langle L(x_{jl})k_l,k_j \rangle=
f((y,0))+\lambda=f((y,\lambda)).$$
Since $(y,\lambda) \in C$, we have that $f((y,\lambda)) \ge 0$ which implies the claim.
\end{proof}

Theorem \ref{genhavi} is a generalization of Theorem \ref{haviland}. 
It is also a generalization of \cite[Proposition 2.1]{Sch}, where the author studies the case $\h=\CC$.

\begin{theorem}
\label{genhavi}
If $\h,\kk$ are Hilbert spaces, $X$ is a closed set in $\RR^d$  
and $$L \colon \RR[\x] \otimes B(\h)_h \to B(\kk)_h$$ is a linear map such that 
$$L \otimes \Id_{M_n(\RR)}(G) \succeq 0$$ for every integer $n \in \NN$ 
and every symmetric polynomial $G \in \RR[\x] \otimes B(\h)_h \otimes M_n(\RR)$ 
such that $G(a) \succeq 0$ for every $a \in X$, then there exists 
a non-negative Borel measure $$m \colon \B(X) \to \LL(B(\h)_h, B(\kk)_h)$$
such that  for every $F \in \RR[\x] \otimes B(\h)$
$$L(F)=\int F \, dm.$$
\end{theorem}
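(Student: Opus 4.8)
The plan is to follow the proof of Theorem \ref{haviland} almost verbatim, replacing the single use of the M.~Riesz extension theorem by the matrix-ordered Arveson extension theorem (Proposition \ref{arveson}), whose complete positivity hypothesis is exactly what is assumed here. First I would let $A_0$ be the range of the hat map $\;\hat{}\;\colon \RR[\x] \to C(X,\RR)$ and define $\bar L$ on $A_0 \otimes B(\h)_h$ by $\bar L(\hat p \otimes B) := L(p \otimes B)$; applying the hypothesis at level $n=1$ to $G=(p-q)\otimes B$ and to $-G$ whenever $p=q$ on $X$ gives $L((p-q)\otimes B)=0$, so $\bar L$ is well defined. Then I would equip $E := C'(X,\RR)\otimes B(\h)_h$ and its subspace $E_0 := A_0 \otimes B(\h)_h$ with the matrix ordering in which $K_n(E)$ consists of the symmetric $G \in M_n(E)$ with $G(a)\succeq 0$ for all $a \in X$; checking the axioms of a real matrix ordered vector space is routine. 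With this ordering, every element of $K_n(E_0)$ is $\hat G_0$ for a symmetric $G_0 \in \RR[\x]\otimes B(\h)_h\otimes M_n(\RR)$ nonnegative on $X$, so the hypothesis says precisely that $\bar L$ is completely positive.

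Next I would verify that $E_0$ is cofinal in $E$: given $F = \sum_i f_i \otimes B_i$, choose $p_i \in \RR[\x]$ with $|f_i|\le |\hat p_i|$ on $X$ and set $F' := \big(\sum_i \|B_i\|(1+p_i^2)\big)\otimes \Id \in E_0$, so that $F'(a)-F(a)\succeq 0$ for every $a\in X$. Proposition \ref{arveson} then extends $\bar L$ to a completely positive, hence positive, map $L'\colon E \to B(\kk)_h$. For $F \in C_c(X,\RR)\otimes B(\h)_h$ one has $\|F\|_\infty (1\otimes\Id) - F \in K_1(E)$, whence $L'(F)\preceq \|F\|_\infty\, L'(1\otimes\Id)$ and symmetrically from below, giving $\|L'(F)\|\le \|L'(1\otimes\Id)\|\,\|F\|_\infty$; thus $L'|_{C_c(X,\RR)\otimes B(\h)_h}$ is positive and bounded. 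Since $X$ is locally compact, $\sigma$-compact and metrizable, Proposition \ref{dobrakov} yields a non-negative operator-valued measure $m\colon \B(X)\to\LL(B(\h)_h,B(\kk)_h)$ with $L'(F)=\int F\, dm$ for all $F \in C_c(X,\RR)\otimes B(\h)_h$.

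It remains to propagate this identity to $E_0$, and by linearity it suffices to treat $F = f\otimes B$ with $f \in C'(X,\RR)_+$ and $B \in B(\h)_+$. Here I would reuse the approximation from Theorem \ref{haviland}: with $p = x_1^2+\cdots+x_d^2$ there are $f_i \in C_c(X,\RR)_+$ increasing to $f$ with $0\le f-f_i\le \frac1i(f+\hat p)^2$. Testing against an arbitrary $x \in \kk$ reduces everything to the scalar positive measure $(m_B)_x(\Delta)=\langle m_B(\Delta)x,x\rangle$: from $0\preceq (f-f_i)\otimes B \preceq \frac1i (f+\hat p)^2\otimes B$ on $X$ and positivity of $L'$ one gets $0\le \langle L'(f\otimes B)x,x\rangle - \langle L'(f_i\otimes B)x,x\rangle \le \frac1i\langle L'((f+\hat p)^2\otimes B)x,x\rangle \to 0$, so $\int f_i\, d(m_B)_x = \langle L'(f_i\otimes B)x,x\rangle \to \langle L'(f\otimes B)x,x\rangle$. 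On the other hand monotone convergence gives $\int f_i\, d(m_B)_x \to \int f\, d(m_B)_x$, and the uniform bound shows $f$ is $m_B$-integrable. Hence $\langle L'(f\otimes B)x,x\rangle = \int f\, d(m_B)_x = \langle (\int f\otimes B\, dm)x,x\rangle$ for every $x$, and since both operators lie in $B(\kk)_h$ they coincide. Therefore $L(F)=\bar L(\hat F)=L'(\hat F)=\int F\, dm$ for all $F \in \RR[\x]\otimes B(\h)_h$.

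The hard part, compared with Theorem \ref{haviland}, is purely organizational: setting up the two function spaces as matrix ordered vector spaces so that (i) the stated complete positivity of $L$ matches complete positivity of $\bar L$ on $E_0$ exactly, and (ii) the cofinality hypothesis of Proposition \ref{arveson} is satisfied. Once Arveson's theorem takes the place of M.~Riesz, the passage through Proposition \ref{dobrakov} and the monotone-convergence extension to $C'(X,\RR)\otimes B(\h)_h$ run parallel to the scalar-codomain case, the only adaptation being that operator inequalities are verified after testing against vectors $x\in\kk$.
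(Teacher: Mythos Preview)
Your proposal is correct and follows essentially the same route as the paper's own proof: define $\bar L$ on $A_0\otimes B(\h)_h$, identify the hypothesis as complete positivity with respect to the pointwise matrix ordering on $C'(X,\RR)\otimes B(\h)_h$, invoke Proposition \ref{arveson} in place of M.~Riesz, apply Proposition \ref{dobrakov} on $C_c$, and then push the integral representation up to $C'$ via the $f_i$ approximation and monotone convergence after pairing with vectors $x\in\kk$. The only difference is that you spell out the well-definedness, cofinality and boundedness steps explicitly, whereas the paper simply refers back to the proof of Theorem \ref{haviland} for those.
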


\begin{proof}
With the notation from the proof of Theorem \ref{haviland}, we have that $E_0=A_0 \otimes B(\h)_h$
is cofinal in $E=C'(X,\RR)\otimes B(\h)_h$ where $K_n(E)$ consists of all elements of $M_n(E)$
which are positive semidefinite in every point of $X$. Furthermore, the mapping 
$\bar{L} \colon E_0 \to B(\kk)_h$ defined by $\bar{L}(\hat{p} \otimes B) := L(p \otimes B)$ 
is completely positive by assumption. By Proposition \ref{arveson}, there exists a
completely positive extension of $\bar{L}$ to $E$. As in the proof of Theorem \ref{haviland}, the restriction
of $\bar{L}$ from $E$ to $C_c(X,\RR) \otimes B(\h)_h$ is bounded. 
By Proposition \ref{dobrakov}, it has the desired integral representation.

It remains to show that this integral representation also works on $E$. 
By linearity, it suffices to take $F=f \otimes B$ where $f\in C'(X,\RR)_+$ and $B \in B(\h)_+$ are arbitrary.
Let $p$ and $f_i$ be as in the proof of Theorem \ref{haviland} and let $x \in \kk$ be arbitrary. Then
$$\langle \bar{L}(F)x,x\rangle =\langle \bar{L}_B(f)x,x\rangle= \lim_{i\to\infty} \langle\bar{L}_B(f_i)x,x\rangle.$$
Since $\bar{L}_B(f_i)=\int f_i \, dE_B$, it follows by the monotone convergence theorem that 
$$\lim_{i\to\infty} \langle\bar{L}_B(f_i)x,x\rangle= \lim_{i\to\infty} \int f_i \, d(E_B)_x= \int f \, d(E_B)_x.$$
It follows that $f$ is $E_B$-integrable (with $K_f=\Vert \bar{L}(F) \Vert$; see Remark \ref{eintdef}).
Therefore,
$$\int f \, d(E_B)_x=\langle (\int f \, dE_B)x,x \rangle = \langle (\int F\, dm)x,x \rangle.$$
Since $x$ was arbitrary, we have that $\bar{L}(F)=\int F\, dm$ as claimed.
\end{proof}

\begin{remark}
If $X$ is compact, we can replace the complete positivity assumption in Theorem \ref{genhavi} with the weaker 
positivity assumption, see Theorem \ref{ver1} below. This can also be done if $\h=\RR$ and $\dim \kk<\infty$
and $X$ is either $\RR$ or $[0,\infty)$, see \cite{Zag,Zag1}.
\end{remark}

\section{Schm\" udgen's theorem}
\label{schsec}

Let $\h$ be a Hilbert space. A subset $\M \subseteq \RR[\x] \otimes B(\h)_h$ is a \textit{quadratic module} 
if $\Id_{\h}\in \M$, $\M+\M\subseteq \M$ and $A^\ast \M A\subseteq \M$ for every $A\in \RR[\x] \otimes B(\h)$.
The smallest quadratic module which contains a given subset $\G$ of $\RR[\x] \otimes B(\h)_h$
will be denoted by $\M_{\G}$. 
For $\h=\RR$ we get the  definition of a quadratic module in $\RR[\x]$.

A quadratic module $\M$ in $\RR[\x] \otimes B(\h)_h$ is \textit{archimedean} 
if for every operator polynomial $F \in \RR[\x] \otimes B(\h)_h$ there exists a number $n \in \NN$ such that 
$n \cdot \Id_{\h}\pm F \in \M$. If $M$ is an archimedean quadratic module in $\RR[\x]$
then the set $M'$ which consists of all finite sums
of elements of the form $mA^TA$ where $m \in M$ and $A\in \RR[\x] \otimes B(\h)$ is clearly
an archimedean quadratic module in $\RR[\x] \otimes B(\h)_h$. 

Theorem \ref{ver1} is an operator version of the Putinar's part of Theorem \ref{sch}.

\begin{theorem} \label{ver1}
Let $L: \RR[\x] \otimes B(\h)_h \rightarrow B(\kk)_h$ be a linear operator, $M \subseteq \RR[\x]$ an archimedean quadratic module and
$K_M:=\{\x\in \RR^d \mid p(\x) \succeq 0 \;\text{for all} \;p \in M\}$.
Then the following statements are equivalent:
\begin{enumerate}
\item There exists a unique non-negative operator-valued measure
$$m: \B(K_M)\rightarrow \LL(B(\h)_h, B(\kk)_h ),$$ such that 
$$L(F)=\int_{K_M} {F\, dm}$$ 
holds for all $F\in \RR[\x] \otimes B(\h)_h.$
\item $L(mA^TA)\succeq 0$ for every $m\in M$ and $A\in \RR[\x] \otimes B(\h)$ (i.e., $L(M') \succeq 0$).
\end{enumerate}
\end{theorem}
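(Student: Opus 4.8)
The implication (1) $\Rightarrow$ (2) is the easy one. If $m$ represents $L$ and is supported on $K_M$, then for $p\in M$ and $A\in\RR[\x]\otimes B(\h)$ the integrand $p\,A^TA$ is pointwise $\succeq 0$ on $K_M$, since $p(\x)\ge 0$ there and $A(\x)^TA(\x)\succeq 0$ everywhere; integrating a pointwise nonnegative operator polynomial against a non-negative operator-valued measure gives $L(pA^TA)=\int_{K_M}pA^TA\,dm\succeq 0$. So I would dispatch this direction in one line.

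For (2) $\Rightarrow$ (1) the plan is to route the argument through Proposition \ref{dobrakov} rather than through the Arveson-type extension used for Theorem \ref{genhavi}. The reason is structural: Proposition \ref{dobrakov} converts a merely positive and bounded map on $C_c(X,\RR)\otimes B(\h)_h$ into a non-negative operator-valued measure, so if I can extend $L$ to a positive bounded map on $C(K_M)\otimes B(\h)_h$ (here $C_c=C$ because $K_M$ will be compact), I obtain $m$ without ever invoking complete positivity. This is precisely the gain promised by compactness in the remark following Theorem \ref{genhavi}.

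The engine of the extension is an archimedean Positivstellensatz for operator polynomials. First, archimedeanity of $M$ gives $N-\sum_i x_i^2\in M$ for some $N$, so $K_M$ is closed and bounded, hence compact; recall also that $M'$ is archimedean. The key step I would prove, or cite from the abstract theory of archimedean quadratic modules in a unital $\ast$-algebra applied to $\RR[\x]\otimes B(\h)$ with the transpose involution, is the following: if $F\in\RR[\x]\otimes B(\h)_h$ satisfies $F\succeq\eps\,\Id$ on $K_M$ for some $\eps>0$, then $F\in M'$. The content is an identification of the states that are positive on $M'$ with integrals of evaluations $F\mapsto\langle F(\x)\xi,\xi\rangle$ over $K_M$, so that $M'$-positivity of a symmetric $F$ is equivalent to $F\succeq 0$ on $K_M$; the strict inequality then pushes $F$ into $M'$ itself. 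Granting this, the rest is routine: applying it to $(\|F\|_\infty+\delta)\Id\pm F$, where $\|F\|_\infty=\sup_{\x\in K_M}\|F(\x)\|$, and using $L(M')\succeq 0$ yields $-(\|F\|_\infty+\delta)L(\Id)\preceq L(F)\preceq(\|F\|_\infty+\delta)L(\Id)$, whence $\|L(F)\|\le\|F\|_\infty\,\|L(\Id)\|$ after $\delta\to 0$. Thus $L$ is sup-norm bounded, and since $\RR[\x]\otimes B(\h)_h$ is dense in $C(K_M)\otimes B(\h)_h$ by Stone--Weierstrass, $L$ extends to a bounded map $\hat L$ there; positivity of $\hat L$ follows by approximating a nonnegative $G$ by operator polynomials that are strictly positive on $K_M$ and hence lie in $M'$. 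Proposition \ref{dobrakov} applied to $\hat L$ then produces the non-negative operator-valued measure $m$ on $\B(K_M)$, and $L(F)=\hat L(F)=\int_{K_M}F\,dm$ for every operator polynomial $F$. Uniqueness of $m$ is inherited from the uniqueness clause of Proposition \ref{dobrakov} together with the density of polynomials.

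The main obstacle is the operator archimedean Positivstellensatz in the step above: making the abstract representation theorem for archimedean quadratic modules go through for operator coefficients with arbitrary, possibly infinite-dimensional, $\h$, and correctly pinning down the $M'$-positive states as those built from evaluations at points of $K_M$ composed with states of $B(\h)$. Once that identification is secured, the boundedness estimate, the extension to $C(K_M)\otimes B(\h)_h$, and the appeal to Proposition \ref{dobrakov} are all standard, and it is exactly this archimedean route (as opposed to the saturation arguments needed for the Schm\"udgen-type Theorem \ref{cim}) that I expect to remain valid for every $\h$ and $\kk$.
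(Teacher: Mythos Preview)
Your proposal is correct and follows essentially the same route as the paper: extend $L$ to a positive bounded map on $C(K_M,\RR)\otimes B(\h)_h$ via the archimedean Positivstellensatz, then invoke Proposition \ref{dobrakov}. The ``main obstacle'' you flag is precisely Proposition \ref{sh} (the operator Scherer--Hol theorem, cited there as a special case of \cite[Theorem 12]{Cim}), and the paper uses it in the same way, deriving boundedness from $n_{M'}(F)=\|\hat F\|_\infty$ and density from Stone--Weierstrass; the only cosmetic difference is that the paper explicitly passes to the image $A_0\otimes B(\h)_h$ to record well-definedness of $\bar L$.
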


For an archimedean quadratic module $M$ in $\RR[\x]$ we define a set
$\overline{M'}=\{F \in R[\underline{x}] \otimes B(\h)_h \mid \eps + F \in M'\; \text{for all} \;\eps>0\}$.
In the sequel, we will need the following version of the Scherer-Hol theorem, 
which is a special case of \cite[Theorem 12]{Cim}. 

\begin{proposition}
\label{sh}
Let $M$ be an archimedean quadratic module in $\RR[\x]$ and $\h$ a Hilbert space.
For every element $F \in \RR[\x] \otimes B(\h)_h$, the following are equivalent:
\begin{enumerate}
\item $F \in \eps+M'$ for some real $\eps>0$.
\item For every $a \in K_M$ we have that $F(a) \succ 0$.
\end{enumerate}
\end{proposition}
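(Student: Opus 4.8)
The implication $(1)\Rightarrow(2)$ is immediate. If $F=\eps\,\Id+\sum_i m_i A_i^T A_i$ with $m_i\in M$ and $A_i\in\RR[\x]\otimes B(\h)$, then for every $a\in K_M$ each scalar $m_i(a)\ge 0$ and each $A_i(a)^\ast A_i(a)\succeq 0$, so $F(a)\succeq\eps\,\Id_{\h}\succ 0$. The substance is the converse.

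For $(2)\Rightarrow(1)$ I would argue in two stages. First a \emph{uniformization}: since $M$ is archimedean, $K_M$ is compact, and since $a\mapsto F(a)$ is norm-continuous the bottom-of-spectrum function $a\mapsto\inf_{\|v\|=1}\langle F(a)v,v\rangle$ is continuous on $K_M$; by $(2)$ it is positive there, hence bounded below by some $\delta>0$. Thus $F(a)\succeq\delta\,\Id_{\h}$ for all $a\in K_M$. It then suffices to prove the core statement that $H:=F-\tfrac{\delta}{2}\Id$, which satisfies $H(a)\succeq\tfrac{\delta}{2}\Id_{\h}\succeq 0$ on $K_M$, lies in $\overline{M'}$: indeed $H\in\overline{M'}$ gives $H+\tfrac{\delta}{4}\Id\in M'$, that is, $F\in\tfrac{\delta}{4}+M'$, which is $(1)$.

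To show $H\in\overline{M'}$ I would use a separation and GNS argument. Because $M$ is archimedean, $M'$ is an archimedean quadratic module with order unit $\Id$, and the standard bipolar argument (Hahn--Banach together with the order-unit seminorm) identifies $\overline{M'}$ with $\{G:\ell(G)\ge 0\text{ for every state }\ell\}$, where a \emph{state} is an $\RR$-linear $\ell\colon\RR[\x]\otimes B(\h)_h\to\RR$ with $\ell(M')\ge 0$ and $\ell(\Id)=1$. So it is enough to check $\ell(H)\ge 0$ for each such $\ell$. Fixing $\ell$, I would perform the GNS construction for the positive semidefinite form $(A,B)\mapsto\ell(B^T A)$ on $\RR[\x]\otimes B(\h)$: after quotienting by its kernel and completing, one obtains a real Hilbert space $\mathcal{H}_\ell$ with cyclic vector $\xi=[\Id]$, commuting bounded self-adjoint operators $X_1,\dots,X_d$ (boundedness from $n\,\Id\pm x_j\in M'$), and a representation $\rho$ of $B(\h)$. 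The joint spectral measure $P$ of $(X_1,\dots,X_d)$ is supported in $K_M$, since $p(X)=\int p\,dP\succeq 0$ for every $p\in M$ forces $P$ to vanish off $\bigcap_{p\in M}\{p\ge 0\}=K_M$. Passing to the resulting direct-integral picture expresses $\ell(H)=\langle H(X)\xi,\xi\rangle$ as an average of the pointwise forms $v\mapsto\langle H(a)v,v\rangle$ over $a\in K_M$; since $H(a)\succeq 0$ on $K_M$, every such form is nonnegative, and hence $\ell(H)\ge 0$, as needed.

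The main obstacle is this last step when $\h$ is infinite-dimensional: one must decouple the commuting tuple $(X_1,\dots,X_d)$ coming from the polynomial variables from the representation $\rho$ of $B(\h)$, so that $H(X)$ is genuinely realized as a direct integral $\int_{K_M}\rho_a\!\left(H(a)\right)dP(a)$ with the required measurability. In finite dimensions this reduces to simultaneous block-diagonalization, but in general it requires the direct-integral decomposition of the commutant together with careful handling of integrability exactly as in Remark~\ref{eintdef}. This technical core is what is encapsulated in \cite[Theorem 12]{Cim}, which is why we invoke that result rather than reproduce the argument; note also that the \emph{uniform} spectral gap produced by the compactness of $K_M$ is precisely what upgrades the pointwise hypothesis $(2)$ to membership with a strictly positive $\eps$.
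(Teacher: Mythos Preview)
The paper does not prove this proposition at all; it simply records it as a special case of \cite[Theorem 12]{Cim} and moves on. Your proposal is consistent with this---you too ultimately invoke \cite[Theorem 12]{Cim}---but you add a correct and informative sketch of the underlying mechanism: uniformize using compactness of $K_M$, reduce via the order-unit bipolar identification of $\overline{M'}$ to checking $\ell(H)\ge 0$ for every state $\ell$, run GNS to obtain commuting bounded self-adjoint $X_1,\dots,X_d$ with joint spectrum in $K_M$ together with a commuting representation of $B(\h)$, and then read off nonnegativity fiberwise. You also correctly flag that the delicate point in infinite dimensions is the measurable disintegration of the $B(\h)$-action along the spectral measure, which is exactly what \cite{Cim} handles.

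Two small remarks on the sketch. First, your uniformization step tacitly uses that $F(a)\succ 0$ means $F(a)$ is positive and \emph{invertible} (bounded below), not merely $\langle F(a)v,v\rangle>0$ for $v\ne 0$; this is the intended reading here (and is necessary, since otherwise the proposition is false already for constant $F$ with spectrum accumulating at $0$). With that reading, $a\mapsto \inf\sigma(F(a))$ is $1$-Lipschitz in $\|F(a)\|$ and your compactness argument goes through. Second, in the final step the fiberwise objects are $\rho_a(H(a))$ rather than $H(a)$ itself; positivity then uses that each $\rho_a$ is a $\ast$-representation. You acknowledge this is precisely the technical content absorbed into the citation, so the imprecision is harmless.
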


\begin{proof}[Proof of Theorem \ref{ver1}] Clearly, (1) implies (2).
Suppose now that (2) is true. Our plan is to extend $L$ to a positive bounded linear map 
from $C(K_M,\RR) \otimes B(\h)_h$ to $B(\kk)_h$ and then apply Proposition \ref{dobrakov}.
This will prove that (1) is true. Recall that the norm and the positive cone of 
$C(K_M,\RR) \otimes B(\h)_h$ are inherited from $C(K_M,B(\h)_h)$, i.e., $\Vert F \Vert
=\sup_{a \in K_M} \Vert F(a) \Vert$ and $F \ge 0$ iff $F(a) \succeq 0$ for every $a \in K_M$.

Let $A_0$ be the range of the natural mapping $\hat{}\; \colon \RR[\x] \to C(K_M,\RR)$.
For every $F= \sum_i p_i \otimes A_i \in \RR[\x] \otimes B(\h)_h$ 
we will write $\hat{F} :=  \sum_i \hat{p_i}\otimes A_i \in C(K_M,\RR) \otimes B(\h)_h$.
We define a linear map $\bar{L} \colon A_0 \otimes B(\h)_h$ by $\bar{L}(\hat{F}):=L(F).$ 
To see that $\bar{L}$ is well-defined and positive, note that if $\hat{F} \succeq 0$ on $K_M$, 
then $F \in \overline{M'}$ by Proposition \ref{sh}. Now, (2) implies that $L(F) \succeq 0$.

Next, we show that $\bar{L}$ is bounded.
For every $v\in \kk$, where $\left\|v\right\|=1$, we define a functional
$\bar{L}_v:A_0 \to \RR$ by $\bar{L}_v(\hat{F})=\left\langle L(F)v,v \right\rangle$. 
Since $\bar{L}_v(M^\prime)\geq 0$, it follows 
that
$$\vert \bar L_v(\hat F) \vert \leq n_{M'}\left(F \right)\, \bar L_v(1),$$
where
$$n_{M'}(F)=\inf\left\{q\in \QQ^+\mid q \cdot \Id \pm F \in M^{\prime}\right\}.$$ 
It follows that 
$$
\Vert \bar{L}(\hat F) \Vert = 
\max_{\Vert v \Vert=1}\vert \bar L_v(\hat F)\vert 
\leq n_{M'}\left(F \right) \max_{\left\|v\right\|=1} \bar L_v(\hat \Id) =  
n_{M'}\left(F \right) \Vert \bar L(\hat \Id) \Vert.
$$
By Proposition \ref{sh}, $n_{M'}(F)=\Vert \hat{F} \Vert$. Hence 
$\Vert \bar{L} (\hat{F}) \Vert \le \Vert \hat{F} \Vert\Vert \bar L(\hat \Id) \Vert $ for every $\hat F$.
Therefore $\bar{L}$ is bounded.

By the Stone-Weierstrass theorem, $A_0$ is dense in $C(K_M,\RR)$. It follows that 
$A_0 \otimes B(\h)_h$ is dense in $C(K_M,\RR) \otimes B(\h)_h$. Therefore, $\bar{L}$ 
has a unique extension to a positive bounded map from $C(K_M,\RR) \otimes B(\h)_h$ to $B(\kk)_h$ by continuity.
\end{proof}

Let us recall from \cite{Cim3} that a quadratic module $\M$ in $S_n(\RR[\x])$ is a \textit{preordering} if 
the set $E_{11} \M E_{11}$ (or equivalently the set $\M \cap \RR[\x] \cdot \I_n$) is closed under multiplication.
The smallest preordering which contains a given set $\G \subseteq S_n(\RR[\x])$ will be denoted by $\T_{\G}$.
We will prove the following matrix version of the Schm\" udgen's part of Theorem \ref{sch}.

\begin{theorem}\label{cim}
Suppose that $\G = \left\{G_1, G_2, \ldots ,G_k \right\}\subseteq S_n(\RR[\x])$ are
such that the set $K_{\G} := \left\{ \x \in \RR^d \mid G_1(\x) \succeq 0, G_2(\x)\succeq 0,\ldots,G_k(\x)\succeq 0\right\}$ is compact. 
Then:
\begin{enumerate}
\item The preordering $\T_{\G}$ is an archimedean quadratic module.
\item Every $F\in S_n(\RR[\x])$ which satisfies $F(\x)\succ 0$ on $K_{\G}$ belongs to $\T_{\G}$.
\item For every Hilbert space $\kk$ and every linear map $L \colon S_n(\RR[\x]) \to B(\kk)_h$ such that
$L(\T_{\G}) \succeq 0$ there exists a unique non-negative measure
$m: \B(K_\G)\rightarrow \LL(S_n(\RR), B(\kk)_h )$ such that 
$L(F)=\int_{K_\G} {F\, dm}$ for every $F\in S_n(\RR[\x])$.
\end{enumerate}
\end{theorem}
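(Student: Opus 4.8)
The plan is to prove (1) first, since (2) and then (3) follow from it by adapting arguments already present in the paper. Granting (1), one checks that $K_{\T_\G}=K_\G$: the inclusion $K_{\T_\G}\subseteq K_\G$ is immediate because $G_1,\dots,G_k\in\T_\G$, while every element of $\T_\G$ is built from the $G_i$, $\I_n$ by operations that preserve positive semidefiniteness on $K_\G$, giving the reverse inclusion. Thus (2) is exactly the strict Positivstellensatz for the archimedean quadratic module $\T_\G$, and I would obtain it by applying the general Scherer--Hol theorem \cite[Theorem 12]{Cim} (whose scalar-module specialization is Proposition \ref{sh}) to $\T_\G$: if $F\succ 0$ on the compact set $K_\G=K_{\T_\G}$, then $F\in\T_\G$. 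For (3) I would essentially copy the proof of Theorem \ref{ver1} with $\h=\RR^n$ and $M'$ replaced by $\T_\G$: define $\bar L(\hat F):=L(F)$ on $A_0\otimes S_n(\RR)$, use (2) together with the closure $\overline{\T_\G}$ and a limiting argument to see that $\bar L$ is well defined and positive, use the archimedean property to bound $\Vert\bar L(\hat F)\Vert$ by $\Vert\hat F\Vert\,\Vert\bar L(\widehat{\I_n})\Vert$ through the gauge $n_{\T_\G}$, extend by Stone--Weierstrass density to $C(K_\G,\RR)\otimes S_n(\RR)$, and apply Proposition \ref{dobrakov}; uniqueness comes from the uniqueness in Proposition \ref{dobrakov} and compactness of $K_\G$.

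The heart of the matter is therefore (1). I would first reduce the archimedean property to the single statement that $N\I_n-(\sum_{i=1}^d x_i^2)\I_n\in\T_\G$ for some $N\in\NN$. This is the matrix analogue of the standard fact that, for a preordering, the bounded elements form a ring: once $\sum_i x_i^2$ is bounded one gets each $x_i\I_n$ bounded, then, using that $\T_\G\cap\RR[\x]\I_n$ is closed under multiplication, every scalar polynomial times $\I_n$ bounded, and finally every $F\in S_n(\RR[\x])$ bounded by dominating its off-diagonal entries through identities such as $\frac12\begin{pmatrix}p&1\\1&p\end{pmatrix}^2=\begin{pmatrix}(p^2+1)/2&p\\p&(p^2+1)/2\end{pmatrix}$, which exhibits the dominating matrix as a hermitian square and hence places it in $\T_\G$.

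To produce $N\I_n-\sum_i x_i^2\I_n\in\T_\G$ I would pass to the scalar set $T^{\circ}:=\{p\in\RR[\x]\mid p\I_n\in\T_\G\}$, which is genuinely a preordering in $\RR[\x]$ precisely because $\T_\G$ is. The decisive finite-dimensional input is that $(v^{T}G_iv)\I_n\in\T_\G$ for every constant vector $v\in\RR^n$: conjugating $G_i$ by the rank-one matrix $v\,e_j^{T}$ gives $(v^{T}G_iv)\,e_je_j^{T}\in\T_\G$, and summing over the orthonormal basis $e_1,\dots,e_n$ yields $(v^{T}G_iv)\I_n$. Since $v^{T}G_i(\x)v\ge 0$ for all $v$ is equivalent to $G_i(\x)\succeq 0$, this shows $K_{T^{\circ}}=K_\G$, which is compact, and I would then invoke the scalar Schm\"udgen theorem (Theorem \ref{sch}) to conclude $N-\sum_i x_i^2\in T^{\circ}$.

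The main obstacle lies in this last invocation. Theorem \ref{sch} requires a \emph{finite} description, whereas $T^{\circ}$ is a priori generated by the infinite family $\{v^{T}G_iv\}$, and for infinitely generated preorderings compactness of the nonnegativity set need not force archimedeanity. I expect the real work to be the extraction of finitely many vectors $v_1,\dots,v_N$ for which $\{\x\mid v_s^{T}G_{i_s}(\x)v_s\ge 0\}$ is already bounded, which is a statement about the behaviour of the $G_i$ at infinity rather than a one-sphere compactness argument. It is exactly here that finite-dimensionality of $\RR^n$ is indispensable: the summation $\sum_j e_je_j^{T}=\I_n$ that delivers $(v^{T}G_iv)\I_n$ is a finite sum, and no such step survives when $\h$ is infinite-dimensional, which is consistent with the failure announced for Section \ref{schex}.
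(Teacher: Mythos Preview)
Your overall architecture is exactly the paper's: reduce (1) to the scalar level by exhibiting a scalar preordering inside $\T_\G$ with nonnegativity set $K_\G$, invoke the classical Schm\"udgen theorem there, and then derive (2) and (3) from archimedeanity. You have also correctly located the one genuine gap: the passage from the infinite family $\{v^T G_i v\}_{v\in\RR^n}$ to a \emph{finite} generating set on which Theorem \ref{sch} can act. You flag this honestly but do not close it.

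The paper closes it not by selecting finitely many vectors $v_s$, but by invoking Proposition \ref{central} (a special case of \cite[Proposition 5]{Cim3}): for any finite $\G\subseteq S_n(\RR[\x])$ there exist finitely many scalar polynomials $g_1,\dots,g_m\in\RR[\x]$ with $g_j\I_n\in\M_\G$ and $K_\G=K_{\{g_1,\dots,g_m\}}$. With this in hand, the classical Theorem \ref{sch} applies to $\{g_1,\dots,g_m\}$, so $T_{\{g_1,\dots,g_m\}}$ is archimedean in $\RR[\x]$; consequently $(T_{\{g_1,\dots,g_m\}})'\subseteq\T_\G$ is archimedean in $S_n(\RR[\x])$, and (1) follows. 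Your trick $\sum_j(ve_j^T)^T G_i(ve_j^T)=(v^T G_i v)\I_n$ is the right kind of identity, and indeed underlies the proof of Proposition \ref{central}, but the actual extraction of a finite set is nontrivial and is imported from \cite{Cim3} rather than carried out here.

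For (2) and (3) you propose to rerun the proofs of Proposition \ref{sh} and Theorem \ref{ver1} with $\T_\G$ in place of $M'$. That works, but the paper avoids the repetition: since $(T_{\{g_1,\dots,g_m\}})'\subseteq\T_\G$ and $K_{T_{\{g_1,\dots,g_m\}}}=K_\G$, one may apply Proposition \ref{sh} and Theorem \ref{ver1} as black boxes to the scalar module $M=T_{\{g_1,\dots,g_m\}}$. Then $F\succ 0$ on $K_\G$ gives $F\in\eps+M'\subseteq\T_\G$, and $L(\T_\G)\succeq 0$ gives $L(M')\succeq 0$, so Theorem \ref{ver1} supplies the representing measure directly.
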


The following special case of \cite[Proposition 5]{Cim3} will be used in the proof:

\begin{proposition}\label{central}
For every subset $\G\subseteq S_n(\RR[\x])$ there exists a subset $\tilde{\G}\subseteq \M_{\G}\cap \RR[\x] \cdot \I_n$ 
such that $K_{\G}=K_{\tilde{\G}}$. If $\G$ is finite, then $\tilde{\G}$ can also be chosen finite.
\end{proposition}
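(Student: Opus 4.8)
The plan is to first dispose of the trivial inclusion and then manufacture enough \emph{central} (scalar) members of $\M_\G$ to cut out the complement of $K_\G$. For any $\tilde\G\subseteq\M_\G\cap\RR[\x]\cdot\I_n$ every $p\,\I_n\in\tilde\G$ satisfies $p(\x)\,\I_n\succeq 0$ on $K_\G$: this holds because $\I_n\in\M_\G$, because $\M_\G$ is closed under addition and under $A\mapsto A^{T}(\cdot)A$, and because each generator $G_i$ is $\succeq 0$ on $K_\G$. Hence $p\ge 0$ on $K_\G$, and therefore $K_\G\subseteq K_{\tilde\G}$ automatically. The entire content is the reverse inclusion: for every $\x\notin K_\G$ I must exhibit some $p\,\I_n\in\tilde\G$ with $p(\x)<0$.

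The basic production rule for central elements is the following identity. For a generator $G_i$ and any column $u\in\RR[\x]^n$, writing $e_l$ for the $l$-th standard column vector (so that $u\,e_l^{T}$ is the matrix polynomial with $u$ in its $l$-th column and zeros elsewhere),
$$\bigl(u^{T}G_iu\bigr)\,\I_n=\sum_{l=1}^{n}\bigl(u\,e_l^{T}\bigr)^{T}G_i\bigl(u\,e_l^{T}\bigr)\in\M_\G,$$
since each summand equals $(u^{T}G_iu)\,E_{ll}$. The same device shows that for \emph{every} $P\in\M_\G$ and every $k$ one has $P_{kk}\,\I_n\in\M_\G$ (take $A=e_k e_l^{T}$ and sum over $l$), hence $(\tr P)\,\I_n\in\M_\G$. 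Letting $u$ range over all constant vectors $v\in\RR^n$ already settles the general, possibly infinite case: since $G_i(\x)\succeq 0$ iff $v^{T}G_i(\x)v\ge 0$ for all $v$, the set $\tilde\G:=\{(v^{T}G_iv)\,\I_n\mid v\in\RR^n,\ 1\le i\le k\}$ lies in $\M_\G\cap\RR[\x]\cdot\I_n$ and satisfies $K_{\tilde\G}=K_\G$.

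For the finite case I would reduce to a single $G$ (replacing $\G$ by the union over $i$ of the finite sets built for each $G_i$) and use the classical fact that $G(\x)\succeq 0$ iff \emph{all} principal minors of $G(\x)$ are $\ge 0$. The point is that suitable minors are reachable inside $\M_\G$ by choosing $u$ to be columns of the cofactor matrix: since $\operatorname{adj}(G)\,G\,\operatorname{adj}(G)=\det(G)\,\operatorname{adj}(G)$, the element $\operatorname{adj}(G)^{T}G\,\operatorname{adj}(G)\in\M_\G$ has diagonal entries $\det(G)\,M_i$, where $M_i$ is the principal minor obtained by deleting the $i$-th row and column; by the $P_{kk}$ trick, $(\det(G)\,M_i)\,\I_n\in\M_\G$. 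Applied recursively to principal submatrices, which are compressions $S^{T}GS$ for a coordinate-selection matrix $S$ (note that $u^{T}(S^{T}GS)u=(Su)^{T}G(Su)$ keeps the resulting scalars inside $\M_\G$), this yields all products of nested chains of principal minors as central members of $\M_\G$. Adjoining finitely many forms $(v^{T}Gv)\,\I_n$ with $v\in\{e_k,\ e_k\pm e_l\}$ to control the loci where leading minors vanish, one obtains a finite candidate $\tilde\G$.

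The main obstacle is precisely this last piece of bookkeeping. A chain product such as $\det(G)\,M_i$ detects $\det(G)\ge 0$ only where $M_i>0$, so the common nonnegativity set of the chosen minors strictly contains $K_\G$ along the strata where lower minors degenerate. Closing this gap requires an induction on $n$: on each such stratum $G$ is restricted via $S^{T}GS$ to the coordinate subspace on which it is still nondegenerate, the construction is repeated for the resulting smaller symmetric matrix polynomial, and the finitely many pieces are assembled into one finite family. This stratified induction is the technical heart of the statement; it is carried out in \cite[Proposition 5]{Cim3}, and verifying that the assembled finite family has common nonnegativity set \emph{exactly} $K_\G$ is where essentially all the work lies.
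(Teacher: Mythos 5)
The paper does not actually prove this proposition: it is imported verbatim as a special case of \cite[Proposition 5]{Cim3}, so your attempt can only be judged as a standalone argument. Its first half succeeds completely. The identity $(u^{T}G u)\,\I_n=\sum_{l}(ue_l^{T})^{T}G(ue_l^{T})$ is correct, as are the diagonal-extraction and trace observations, and taking $\tilde\G=\{(v^{T}Gv)\,\I_n \mid v\in\RR^n,\ G\in\G\}$ is a complete, elementary proof of the first sentence for arbitrary (possibly infinite) $\G$ --- which is more than the paper itself supplies.

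The genuine gap is the finiteness claim, and it is not mere bookkeeping; note that this is the only part the paper actually uses, since the proof of Theorem \ref{cim} feeds the finitely many scalars $g_i$ into Theorem \ref{sch}, which requires a \emph{finite} scalar system. Your explicitly named finite family --- diagonal entries, the forms $v^{T}Gv$ with $v\in\{e_k,\,e_k\pm e_l\}$, and the nested chain products $\det(G_I)\cdot M_{I,i}$ produced by the adjugate identity --- provably fails already for $n=3$: the constant matrix $G=\left(\begin{smallmatrix}1&1&1\\1&1&-1\\1&-1&1\end{smallmatrix}\right)$ has $\det G=-4$, so $K_{\{G\}}=\emptyset$, yet every scalar in your family is nonnegative everywhere: the diagonal entries equal $1$; the six values $G_{kk}+G_{ll}\pm 2G_{kl}$ are $4,0,4,0,0,4$; and every principal $2\times 2$ minor vanishes, so every chain product built by your recursion (which steps down one size at a time and hence either passes through size $2$ or stays within sizes $\le 2$) is $0$. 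Thus $K_{\tilde\G}=\RR^d\neq K_{\{G\}}$ for this candidate. A level-skipping product such as $\det(G)\,G_{11}$ would detect the example, but none of your identities places it in $\M_{\{G\}}$; relatedly, your phrase ``yields all products of nested chains'' is itself unjustified, since central elements of a quadratic module cannot in general be multiplied together (only conjugation by arbitrary elements and addition are free). You correctly diagnose that a stratified induction over the degeneration loci is required and defer it to \cite[Proposition 5]{Cim3} --- which is exactly what the paper does --- but as a blind proof this leaves the second sentence, the substantive one, unproved.
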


\begin{proof}[Proof of Theorem \ref{cim}]
By Proposition \ref{central}, there exist $g_1,g_2,\ldots, g_k \in \RR[\x]$ such that 
$K_{\G}=K_{\{g_1\cdot\I_n, g_2\cdot\I_n, \ldots ,g_k\cdot\I_n\}}=K_{\left\{g_1,g_2,\ldots, g_k\right\}}$
and $g_1\cdot\I_n, g_2\cdot\I_n, \ldots ,g_k\cdot\I_n \in \M_{\G}$.
Since $K_\G$ is compact, it follows by Theorem \ref{sch} that $T_{\left\{g_1,g_2,\ldots, g_k\right\}}$ 
is an archimedean preordering in $\RR[\x]$. Now $\T_{\G}$ is an archimedean because it contains 
the archimedean quadratic module $\left(T_{\left\{g_1,g_2,\ldots, g_k\right\}}\right)'$.
This proves claim (1). Claim (2) follows from claim (1) and Proposition \ref{sh}.
Claim (3) follows from claim (1) and Theorem \ref{ver1}.
\end{proof}

\section{An example}
\label{schex}

Let $\h$ be a Hilbert space. A quadratic module $\T \subseteq \RR[\x] \otimes B(\h)_h$ is a \textit{preordering} 
if for some (and hence every) rank one projector $P \in B(\h)_h$ the set $P \T P$ is closed under multiplication. 
Recall that $P$ is the form $P_u : x \rightarrow \left\langle x,u\right\rangle u$ for some $u \in \h$ of norm $1$.
Moreover, $P_{Su} =SP_u S^{\ast}$ and $P_u S P_u = \left\langle Su,u\right\rangle\,P_u$ for all $S \in \RR[\x] \otimes B(\h)$.
For a subset $\G$ of $\RR[\x] \otimes B(\h)_h$ write $\T_\G$ for the smallest preordering containing $\G$.

\begin{lemma}
Let $\G$ be a subset of $\RR[\x] \otimes B(\h)_h$ and $u$ an element of $\h$ of norm $1$. Write $\G_u$ for the set of
all finite products of elements of the form 
$$P_u S^\ast G S P_u=\langle GSu,Su\rangle P_u$$
where $G \in \G \cup \{\Id\}$ and $S \in\RR[\x] \otimes B(\h)$.
Then $$\T_\G=\M_{\G \cup \G_u}.$$
\end{lemma}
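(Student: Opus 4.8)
The plan is to prove the two inclusions $\M_{\G \cup \G_u} \subseteq \T_\G$ and $\T_\G \subseteq \M_{\G \cup \G_u}$ separately, using throughout the elementary identities $P_u^\ast = P_u$, $P_u P_u = P_u$ (valid since $\|u\|=1$), $P_u S P_u = \langle Su, u\rangle P_u$ and $P_u S^\ast G S P_u = \langle GSu, Su\rangle P_u$ for $S \in \RR[\x]\otimes B(\h)$, all of which turn a $P_u$-sandwich into a scalar polynomial times $P_u$. Recall that every preordering is in particular a quadratic module, that $\T_\G$ is the smallest preordering containing $\G$, and that $\M_{\G\cup\G_u}$ is the smallest quadratic module containing $\G\cup\G_u$.

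For $\M_{\G\cup\G_u} \subseteq \T_\G$ it suffices, since $\T_\G$ is a quadratic module containing $\G$, to check $\G_u \subseteq \T_\G$. A generator of $\G_u$ is a finite product of factors $P_u S^\ast G S P_u = P_u (S^\ast G S) P_u$ with $G \in \G \cup \{\Id\}$. Each $S^\ast G S$ lies in $\T_\G$ (as $G \in \T_\G$ and $\T_\G$ is a quadratic module), so each factor lies in $P_u \T_\G P_u$. Because $\T_\G$ is a preordering, $P_u \T_\G P_u$ is closed under multiplication, whence the whole product, i.e. the given element of $\G_u$, again lies in $P_u \T_\G P_u \subseteq \T_\G$.

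For the reverse inclusion I would show that $\M := \M_{\G\cup\G_u}$ is itself a preordering; since it contains $\G$ and $\T_\G$ is the smallest such, this gives $\T_\G \subseteq \M$. By definition I must verify that $P_u \M P_u$ is closed under multiplication. The key computation is that every generator behaves nicely under the $P_u$-sandwich: for $C \in \RR[\x]\otimes B(\h)$ one gets $P_u C^\ast \Id C P_u = \langle Cu, Cu\rangle P_u$, then $P_u C^\ast G C P_u = \langle GCu, Cu\rangle P_u$ for $G \in \G$, and $P_u C^\ast (\pi P_u) C P_u = \pi\,\langle Cu, u\rangle^2 P_u$ for a generator $\pi P_u \in \G_u$ (here $\pi$ is the product of shadows $\langle G_iS_iu,S_iu\rangle$ defining that element). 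Summing over generators shows $P_u \M P_u = \{q P_u : q \in Q\}$, where $Q \subseteq \RR[\x]$ is exactly the preordering in $\RR[\x]$ generated by the scalar shadows $\{\langle GSu, Su\rangle : G \in \G,\ S \in \RR[\x]\otimes B(\h)\}$: the displayed computations realize each such $q$ as a finite sum $\sum_j \pi_j \sigma_j$ with $\pi_j$ a product of shadows and $\sigma_j \in \sos{\RR[\x]}$, while conversely $\pi\sigma P_u = \sum_k (s_k P_u)^\ast (\pi P_u)(s_k P_u) \in \M$ whenever $\sigma = \sum_k s_k^2$ and $\pi P_u \in \G_u$. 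As a preordering of the commutative ring $\RR[\x]$, the set $Q$ is closed under multiplication, and since $(q_1 P_u)(q_2 P_u) = q_1 q_2 P_u$, so is $P_u \M P_u$.

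The main obstacle is the reverse inclusion, and within it the identification of $P_u \M P_u$ with $Q\,P_u$ for a genuine preordering $Q$ of $\RR[\x]$. The routine-looking point that actually carries the proof is that the three types of $P_u$-sandwiches above all land in the preordering generated by the shadows; in particular the mixed term $\pi\langle Cu,u\rangle^2 P_u$ coming from a $\G_u$-generator must be recognized as (product of shadows)$\times$(square), so that multiplicativity in the commutative ring $\RR[\x]$, rather than in the noncommutative $\RR[\x]\otimes B(\h)$, is what yields closure under multiplication.
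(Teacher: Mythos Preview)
Your proof is correct and follows essentially the same strategy as the paper: verify $\G_u\subseteq\T_\G$ for one inclusion, and for the other show that $\M_{\G\cup\G_u}$ is already a preordering by computing $P_u\M P_u$. The only cosmetic difference is that the paper identifies $P_u\M P_u$ directly with the finite sums $\sum\G_u$ (which is visibly closed under multiplication because $\G_u$ consists of finite products), whereas you repackage the same computation as $P_u\M P_u = Q\,P_u$ for the scalar preordering $Q$ generated by the shadows; since $\sum\G_u = Q\,P_u$, the two arguments coincide.
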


\begin{proof}
The inclusion $\M_{\G \cup \G_u}\subseteq \T_\G$ is clear. To prove the opposite inclusion, it suffices to show that
the quadratic module $\M_{\G \cup \G_u}$ is a preordering. Every element $F \in \M_{\G \cup \G_u}$ is of the form
$F= \sum_i R_i^\ast G_i R_i +\sum_j S_j^\ast H_i S_j$ where $G_i \in \G\cup \Id$, $H_j \in \G_u$, $R_i,S_j \in \RR[\x] \otimes B(\h)$
and both sums are finite. It follows that $P_u F P_u=\sum_i P_u R_i^\ast G_i R_i P_u +\sum_j P_u S_j^\ast P_u H_i P_u S_j P_u
=\sum_i P_u R_i^\ast G_i R_i P_u +\sum_j  H_i P_u S_j^\ast P_u^2 S_j P_u$ is a finite sum of elements from $\G_u$. 
Therefore, the set $P_u \M_{\G_u} P_u =\sum_{\mathrm{finite}} \G_u$ is closed under multiplication.
\end{proof}

Note that for every $f \in \RR[\x]\otimes \h$ and every $u \in \h$ of norm $1$ there exists an element 
$F \in \RR[\x]\otimes B(\h)$ such that $f=Fu$. It follows that the set $\G_u$ consists of all finite products
of elements of the form $\langle Gf,f \rangle P_u$ where $G \in \G \cup \{\Id\}$ and $f \in \RR[\x]\otimes \h$.

\subsection{Construction of a compact non-archimedean preordering}

We define polynomials 
$p_i(x)=\frac{x^3}{i}-x^2$, $i \in \NN$.
We have $K_{\left\{p_i\right\}}=\left\{0\right\}\cup [i,\infty)$. 
Let us define operator polynomial $G(x)\in \RR[x]\otimes B(\ell^2)$ as 
$$G(x)=\diag(p_1(x), p_2(x),\ldots),$$
which is equivalent to
\beqn
G&=&
x^3
\left(
\begin{array}{cccc}
1&0&0&\ldots\\
0&\frac{1}{2}&0&\ldots\\
0&0&\frac{1}{3}&\ldots\\
\vdots&\vdots&\vdots&\ddots\\
\end{array}
\right)
-
x^2
\left(
\begin{array}{cccc}
1&0&0&\ldots\\
0&1&0&\ldots\\
0&0&1&\ldots\\
\vdots&\vdots&\vdots&\ddots\\
\end{array}
\right)
\eeqn
We have $K_{\left\{G \right\}}=\left\{0\right\}$. 
Let $u=(1,0,0,\ldots)$. Clearly, the leading coefficient of $G$ as well as the leading coefficients of all elements from
$\{G\}_u$ are positive semidefinite operators. It follows that the leading coefficient of every element from 
$\T_{\left\{G \right\}}=\M_{\left\{G \right\} \cup \left\{G \right\}_u}$ is a positive semidefinite operator.
Therefore, $T_{\left\{G\right\}}$ does not contain $(K^2-x^2)\Id$ for any real $K$. 
It follows that the preordering $T_{\left\{G\right\}}$ is not archimedean. Moreover, the operator polynomial 
$(1-x^2)\Id$ is positive definite on $K_{\left\{G \right\}}=\left\{0\right\}$ but it does not belong to $T_{\left\{G\right\}}$.

This proves that assertions (1) and (2) of Theorem \ref{cim} do not extend from matrix polynomials to operator polynomials.
It is still an open question whether assertion (3) of Theorem \ref{cim} extends from matrix polynomials to operator polynomials.

We claim that in our example, every functional $L$ on $\RR[x] \otimes B(\ell^2)$ such that $L(\T_{\{G\}}) \ge 0$ has an integral
representation. Let $S \colon (x_1,x_2,x_3,\ldots) \mapsto (x_2,x_3,x_4,\ldots)$ be the shift operator. Note that for every $n \in \NN$,
$S^n G (S^\ast)^n = A_n x^3-\Id x^2$ where
\beqn
A_n & = & \left(
\begin{array}{cccc}
\frac{1}{n+1}&0&0&\ldots\\
0&\frac{1}{n+2}&0&\ldots\\
0&0&\frac{1}{n+3}&\ldots\\
\vdots&\vdots&\vdots&\ddots\\
\end{array}
\right)
\preceq 
\frac{1}{n+1} \Id
\eeqn
Since $L(\T_{\{G\}}) \ge 0$, it follows that $L(A_n x^3)-L(\Id x^2)=L(S^n G (S^\ast)^n)\ge 0$ for every $n$.
By the Cauchy-Schwartz inequality, it follows that $0 \le L(\Id x^2) \le L(A_n x^3) \le L(A_n^2)^{1/2}L(\Id x^6)^{1/2}
\le \frac{1}{(n+1)} L(\Id)^{1/2}L(\Id x^6)^{1/2}$. In the limit, we get that $L(\Id x^2)=0$. Using Cauchy-Schwartz again,
we deduce that $L(x^{k} B_k)=0$ for every $k \in \NN$ and $B_k \in B(\ell^2)$. Therefore, for every $F=\sum_{k=0}^m x^k B_k$,
we have that $L(F)=L(B_0)=L\vert_{B(\ell^2)} (F(0))$. Therefore $L$ has a representing measure which assigns to the set $\{0\}$
the functional $L\vert_{B(\ell^2)}$.

\end{document}